\newtheorem{lemma}{Lemma}[section]
\newtheorem{theorem}[lemma]{Theorem}
\newtheorem{proposition}[lemma]{Proposition}
\newtheorem{corollary}[lemma]{Corollary}
\newtheorem{result}{Theorem}
\newtheorem{corollaryresult}[result]{Corollary}
\theoremstyle{definition}
\newcommand{\abs}[1]{\ensuremath{\left| #1 \right|}}
\newcommand{\op}{\operatorname}
\newcommand{\ce}[2]{\operatorname{C}_{#1}(#2)}
\newcommand{\no}[2]{\operatorname{N}_{#1}(#2)}
\newcommand{\ze}[1]{\operatorname{Z}(#1)}
\newcommand{\rad}[2]{\op{O}_{#1}(#2)}
\newcommand{\rup}[2]{\op{O}^{#1}(#2)}
\newcommand{\syl}[2]{\op{Syl}_{#1}\left(#2\right)}
\newcommand{\groupgen}[1]{\langle #1 \rangle}
\newcommand{\irr}{\operatorname{Irr}}
\newcommand{\gal}{\operatorname{Gal}}
\newcommand{\smid}{\! \mid \!}
\DeclareMathSymbol{\shortminus}{\mathbin}{AMSa}{"39}
\renewcommand{\phi}{\varphi}
\renewcommand{\epsilon}{\varepsilon}
\newcommand{\Q}{\mathbb{Q}}
\newcommand{\ZZ}{{\mathbb{Z}}}
\newcommand{\GL}{\operatorname{GL}}
\newcommand{\PGL}{\operatorname{PGL}}
\newcommand{\SL}{\operatorname{SL}}
\newcommand{\PSL}{\operatorname{PSL}}
\newcommand{\SU}{\operatorname{SU}}
\newcommand{\PSU}{\operatorname{PSU}}
\newcommand{\Sp}{{{\operatorname{Sp}}}}
\newcommand{\SO}{{{\operatorname{SO}}}}
\newcommand{\Spin}{{{\operatorname{Spin}}}}
\newcommand{\PSO}{{{\operatorname{PSO}}}}
\newcommand{\tw}[1]{{}^{#1}\!}
\let\be=\beta
\let\la=\lambda
\title{On the degrees of irreducible characters fixed by some field automorphism in finite groups}
\date{\today}
\author{Nicola Grittini \thanks{The paper was written while the author was visiting the Department of Mathematics in the RPTU, in Kaiserslautern. The author is grateful for the generous hospitality and the pleasant environment he found in the University.}}
\begin{document}

\maketitle

\begin{abstract}
We prove a variant of the Theorem of Ito-Michler, investigating the properties of finite groups where a prime number $p$ does not divide the degree of any irreducible character left invariant by some Galois automorphism $\sigma$ of order $p$.
\end{abstract}

\section{Introduction}

This paper is meant to be the continuation of the work the author began in \cite{Grittini:Degrees_of_irreducible_fixed_p-solvable}. It was proved in \cite{Grittini:Degrees_of_irreducible_fixed_p-solvable} that, if $G$ is a $p$-solvable group, for some prime number $p$, $\sigma \in \gal(\mathbb{Q}_{\abs{G}}/\mathbb{Q})$ is of order $p$, and $p$ does not divide the degree of any irreducible character of $G$ fixed by $\sigma$, then $G$ has a normal Sylow $p$-subgroup. In $p$-solvable groups, this extends \cite[Theorem~A]{Dolfi-Navarro-Tiep:Primes_dividing}, which is itself a variant of the celebrated Theorem of Ito-Michler.

As it was already noted in \cite{Grittini:Degrees_of_irreducible_fixed_p-solvable}, the hypothesis on group $p$-solvability is essential, since it is possible to find simple groups, of order divisible by a prime number $p$, where $p$ does not divide the degree of any $\sigma$-invariant irreducible character of the group, for some Galois automorphism $\sigma$ of order $p$. Examples of this kind are $\op{PSL}_2(8)$ and $\op{PSL}_2(32)$ with $p=3$, or $\op{PSL}_2(81)$ and $\op{Sz}(32)$ for $p=5$.

Nevertheless, in this paper we prove that it is still possible to extend the results of \cite{Grittini:Degrees_of_irreducible_fixed_p-solvable} to non-solvable groups, and our hypothesis on $\sigma$-invariant character degrees imposes severe restrictions to the group normal structure also in the non-solvable case.

\begin{result}
\label{result:main}
Let $G$ be a finite group, let $p$ be a prime number and let $\sigma \in \gal(\mathbb{Q}_{\abs{G}}/\mathbb{Q})$ of order $p$. Suppose that $p$ does not divide the degree of any irreducible character of $G$ fixed by $\sigma$, then $\rup{p'}{G} = \rad{p}{G} \times K$ for some $K \lhd G$ such that $\rad{p'}{K}$ is solvable and $K/\rad{p'}{K}$ is the direct product of some non-abelian simple groups of Lie type with no $\sigma$-invariant characters of degree divisible by $p$. Moreover, if $p=2$ then $G$ has a normal Sylow $2$-subgroup.
\end{result}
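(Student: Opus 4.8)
\emph{Proof strategy.} The plan is to split into the $p$-solvable case, where \cite{Grittini:Degrees_of_irreducible_fixed_p-solvable} applies directly, and the non-$p$-solvable case, where the heart of the matter is a classification-based analysis of the nonabelian composition factors of $\rup{p'}{G}$. Two reductions are used throughout. First, the hypothesis is inherited by every quotient $G/N$: a $\sigma$-invariant $\chi\in\Irr(G/N)$, inflated to $G$, is $\sigma$-invariant of the same degree, so $p\nmid\chi(1)$ -- and if $\sigma$ happens to restrict trivially to $\mathbb{Q}_{\abs{G/N}}$ one is simply in the situation of the Ito--Michler theorem. Secondly, through character triple isomorphisms chosen $\groupgen{\sigma}$-equivariantly, the hypothesis also controls the $\sigma$-invariant, $G$-invariant irreducible characters of normal subgroups, which lets it descend to the relevant sections. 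If $G$ is $p$-solvable, \cite{Grittini:Degrees_of_irreducible_fixed_p-solvable} provides a normal Sylow $p$-subgroup $P$; then $\rup{p'}{G}=P=\rad{p}{G}$, because a nontrivial normal $p$-subgroup cannot survive in the $p'$-group $G/\rup{p'}{G}$, and we take $K=1$. In particular the final assertion of the theorem holds here when $p=2$.

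Assume from now on that $G$ is not $p$-solvable, so that $\rup{p'}{G}$ has a nonabelian composition factor. The crux of the proof is the following assertion, call it $(\ast)$: \emph{every nonabelian composition factor of $\rup{p'}{G}$ is a simple group of Lie type possessing no $\sigma$-invariant irreducible character of degree divisible by $p$, and none of these factors is a $p'$-group}. Granting $(\ast)$, the conclusion is assembled as follows. The composition factors of $\rad{p'}{\rup{p'}{G}}$ are either solvable or nonabelian simple $p'$-groups; the latter are excluded by $(\ast)$, so $\rad{p'}{\rup{p'}{G}}$ is solvable. The layer of $\rup{p'}{G}$ then has, modulo its centre, the form $S_1\times\cdots\times S_r$ with each $S_i$ of the type described in $(\ast)$. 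Finally, $\rad{p}{G}$ splits off $\rup{p'}{G}$ as a direct factor: here one uses that $\rad{p}{G}$ and $\rad{p'}{G}$ commute, that the layer centralizes $\rad{p}{G}$, and that the hypothesis prevents $\rup{p'}{G}$ from inducing outer automorphisms of $p$-power order on $\rad{p}{G}$ -- such an action would, by Clifford theory over a suitable subgroup of $\rad{p}{G}$ combined with Ito--Michler, produce a $\sigma$-invariant character of degree divisible by $p$. The complement $K\lhd G$ then satisfies $\rad{p'}{K}=\rad{p'}{\rup{p'}{G}}$, which is solvable, and $K/\rad{p'}{K}\cong S_1\times\cdots\times S_r$, as required. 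When $p=2$ one verifies that every non-abelian simple group admits, for every relevant order-two $\sigma$, an $\sigma$-invariant irreducible character of even degree -- this rests on \cite[Theorem~A]{Dolfi-Navarro-Tiep:Primes_dividing} in the case $\sigma$ is complex conjugation (a non-abelian simple group, having no normal Sylow $2$-subgroup, must have a real irreducible character of even degree) and on a direct analysis of the simple groups in general -- and there is no non-abelian simple $2'$-group by Feit--Thompson; hence $r=0$, $K=\rad{2'}{K}$ is a solvable $2'$-group, and since $\rup{2'}{G}=\rad{2}{G}\times K$ has the normal subgroup $\rad{2}{G}$ as a Sylow $2$-subgroup while $G/\rup{2'}{G}$ is a $2'$-group, $\rad{2}{G}$ is a normal Sylow $2$-subgroup of $G$.

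It remains to prove $(\ast)$, which is where the real work lies. Passing to $G/\rad{p'}{G}$ -- which still satisfies the hypothesis and has trivial $p'$-core -- and using that the components of a normal subgroup are components of the ambient group, the analysis reduces to the almost simple groups $S\le A\le\op{Aut}(S)$ attached to a non-solvable minimal normal subgroup $N=S^k$ (with $A=\no{G}{L_i}/\ce{G}{L_i}$ for a component $L_i$ of $N$), together with the residual case in which a $p$-element of $G$ acts nontrivially on a nonabelian simple $p'$-section. One must show that if $S$ is alternating or sporadic, or is of Lie type but admits a $\sigma$-invariant irreducible character of degree divisible by $p$, or is a nonabelian $p'$-group fed by a $p$-automorphism, then $G$ has a $\sigma$-invariant irreducible character of degree divisible by $p$, a contradiction. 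In the first three cases the classification of finite simple groups supplies a $\theta\in\Irr(S)$ with $p\mid\theta(1)$, $\theta^\sigma=\theta$, and $\theta$ extending to $A$; in the last case one instead observes that the actions of $\sigma$ and of the relevant outer $p$-automorphism on $\Irr(S)$ both factor through a common cyclic $p$-group -- the Sylow $p$-subgroup of the automorphism group of a suitable maximal torus, which is cyclic for odd $p$ -- hence are powers of one another, and this again forces a $\sigma$-invariant character of degree divisible by $p$ on the wreathed-up group. From $\theta$ (or its analogue) one produces a $\sigma$-invariant irreducible character of $G$ of degree divisible by $p$ by Gallagher's theorem and the Clifford theory of the action of $G$ on the components of $N$, followed by inflation from $G/\rad{p'}{G}$; keeping the chosen extensions and the induced characters $\sigma$-invariant is the delicate step.

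Indeed, the main obstacle of the whole argument is exactly this last point: an extension of a $\sigma$-invariant character need not be $\sigma$-invariant. As in the real case treated in \cite{Dolfi-Navarro-Tiep:Primes_dividing}, one has to choose the extension to $A$ with care -- after checking that the relevant cohomological obstruction vanishes, one twists it by a suitable linear character of $A/S$ so that it remains fixed by $\sigma$ -- and it is precisely the failure of this to be always possible that lets Lie type groups such as $\PSL_2(q)$ and $\op{Sz}(q)$ (for suitable $q$) slip through and appear as the $S_i$ in the statement.
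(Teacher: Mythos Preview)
Your outline has the right overall shape, but it hand-waves precisely the two places where the substantive mathematics lies, and in both places the mechanism you propose is not strong enough.

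The most serious gap is the direct-factor splitting $\rup{p'}{G}=\rad{p}{G}\times K$. You reduce this to ``the hypothesis prevents $\rup{p'}{G}$ from inducing outer automorphisms of $p$-power order on $\rad{p}{G}$'', to be obtained by ``Clifford theory over a suitable subgroup of $\rad{p}{G}$ combined with Ito--Michler''. But a perfect group can act faithfully and irreducibly on an elementary abelian $p$-group $M$ with every orbit on $\irr(M)\setminus\{1\}$ of $p'$-length --- these are exactly the $p$-exceptional modules of \cite{Giudici-Liebeck-Praeger-Saxl-Tiep:Arithmetic_results} --- and then Clifford induction produces nothing of degree divisible by $p$. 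The paper deals with this by invoking the full classification of irreducible $p$-exceptional modules together with Hering's theorem on $2$-transitive affine groups, and then checking case by case that each surviving possibility yields a $\sigma$-invariant character of degree divisible by $p$ upstairs. Even after one knows the perfect part centralises $\rad{p}{G}$, one still needs $\rad{p}{G}$ to split off as a direct (not merely central) factor; this requires the Schur-multiplier computation of Corollary~\ref{corollary:Schur_multiplier}, which in turn rests on Theorem~\ref{result:simple_rational}. None of this is Ito--Michler.

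The second gap is that your statement $(\ast)$ controls only the nonabelian composition factors of $\rup{p'}{G}$, and this is not enough to conclude that $K/\rad{p'}{K}$ is a \emph{direct product} of them. One must also exclude that $K/\rad{p'}{K}$ sits strictly between a simple $S$ and $\op{Aut}(S)$ with $p\mid\abs{K/\rad{p'}{K}S}$, and rule out nontrivial permutation of the simple factors by a $p$-element. In the paper this is the content of Theorem~\ref{theorem:almost_simple} (built on explicit Deligne--Lusztig constructions of semisimple characters, Proposition~\ref{proposition:semisimple_characters}, plus Theorem~\ref{result:simple_rational} and Proposition~\ref{proposition:PSL3}) together with Proposition~\ref{proposition:rational}. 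Your proposed shortcut for the ``$p'$-simple group with a $p$-automorphism'' case --- that the actions of $\sigma$ and of the outer $p$-automorphism on $\irr(S)$ factor through a common cyclic $p$-group of torus automorphisms --- conflates Galois action with group automorphisms and does not hold in general; the paper's route via semisimple characters is genuinely needed here.
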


We may notice that in general we cannot hope for $\rad{p'}{K}$ to be trivial, since $\op{SL}_2(8)$ would be a counterexample for $p=3$. Hence, this is probably the closest we can be to have a normal Sylow $p$-subgroup in a non-$p$-solvable group.

We can see that, if $G$ is a finite group and $p$ is an odd prime number, it is possible to take $\sigma \in \gal(\mathbb{Q}_{\abs{G}}/\mathbb{Q})$ of order $p$ of minimal kernel, such that, for each prime number $r$ dividing $\abs{G}$, the restriction of $\sigma$ to $\Q_{\abs{G}_r}$ is equal the identity if and only if $p \nmid [\Q_{\abs{G}_r} : \Q]$. Then, we have that a character $\chi \in \irr(G)$ is $\sigma$-invariant if and only if $p \nmid [\Q(\chi) : \Q]$ and, therefore, either $G$ verifies the hypothesis of our Theorem~\ref{result:main}, or there exists $\chi \in \irr(G)$ such that $p$ divides $\chi(1)$ and not $[\Q(\chi) : \Q]$. Hence, the following corollary holds.

\begin{corollaryresult}
\label{corollaryresult:field_index}
Let $G$ be a finite group and let $p$ be an odd prime number. Suppose that, for each $\chi \in \irr(G)$, if $p$ divides $\chi(1)$ then it also divides $[\Q(\chi) : \Q]$. Then, $\rup{p'}{G} = \rad{p}{G} \times K$ for some $K \lhd G$ such that $\rad{p'}{K}$ is solvable and $K/\rad{p'}{K}$ is the direct product of some non-abelian simple groups of Lie type.
\end{corollaryresult}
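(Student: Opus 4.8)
The plan is to obtain Corollary~\ref{corollaryresult:field_index} from Theorem~\ref{result:main} by exhibiting a single Galois automorphism of order $p$ whose invariant irreducible characters are precisely those whose field of values has degree prime to $p$. Write $\abs{G}=\prod_r r^{a_r}$ and identify $\gal(\Q_{\abs{G}}/\Q)$ with $\prod_r\gal(\Q_{\abs{G}_r}/\Q)\cong\prod_r(\ZZ/r^{a_r}\ZZ)^\times$. Since $p$ is odd, every factor $(\ZZ/r^{a_r}\ZZ)^\times$ with $r$ odd is cyclic and $(\ZZ/2^{a_2}\ZZ)^\times$ has order prime to $p$; hence each factor with $p\mid[\Q_{\abs{G}_r}:\Q]=\phi(r^{a_r})$ contains a unique subgroup of order $p$. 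I would let $\sigma_r$ generate that subgroup for each such $r$, set $\sigma_r=1$ for the remaining $r$, and put $\sigma=(\sigma_r)_r$. If no prime $r$ satisfies $p\mid\phi(r^{a_r})$, then $p\nmid\phi(\abs{G})$, so $[\Q(\chi):\Q]$ divides $\phi(\abs{G})$ and is prime to $p$ for every $\chi\in\irr(G)$; the hypothesis then forces $p\nmid\chi(1)$ for all $\chi$, the Theorem of Ito--Michler provides a normal Sylow $p$-subgroup, and since every $p$-element of $G$ lies in it we obtain $\rup{p'}{G}=\rad{p}{G}$, so the assertion holds with $K=1$. Hence I may assume that $\sigma$ has order $p$.

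The core of the argument is the claim that, for this minimal-kernel $\sigma$, a character $\chi\in\irr(G)$ is $\sigma$-invariant if and only if $p\nmid[\Q(\chi):\Q]$; equivalently, $\sigma$-invariance of $\chi$ coincides with invariance under the whole (unique) Sylow $p$-subgroup $P$ of $\gal(\Q_{\abs{G}}/\Q)$. One implication is pure Galois theory: if $p\nmid[\Q(\chi):\Q]$ then $\gal(\Q_{\abs{G}}/\Q(\chi))$ contains a Sylow $p$-subgroup of the abelian group $\gal(\Q_{\abs{G}}/\Q)$, hence contains $\sigma$. For the converse one must use the choice of $\sigma$: working prime by prime, $\sigma$ was taken to be nontrivial, and of smallest possible kernel among elements of order $p$, on every cyclotomic piece $\Q_{\abs{G}_r}$ whose Galois group has order divisible by $p$, so that $\sigma$ fixing $\Q(\chi)$ cannot happen ``partially'' at any such $r$ and already forces all of $P$ to fix $\Q(\chi)$. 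Pinning down this last step, i.e.\ controlling the projections of $\gal(\Q_{\abs{G}}/\Q(\chi))$ onto the factors $(\ZZ/r^{a_r}\ZZ)^\times$, is the part I expect to require the most care and is the main obstacle.

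With the equivalence in hand the conclusion follows quickly. If the hypothesis of Theorem~\ref{result:main} failed for $G$ with this $\sigma$, there would be a $\sigma$-invariant $\chi\in\irr(G)$ with $p\mid\chi(1)$; but then $p\nmid[\Q(\chi):\Q]$, contradicting the hypothesis of the Corollary. Hence $G$ satisfies the hypothesis of Theorem~\ref{result:main}, and since $p$ is odd its concluding clause about the case $p=2$ does not apply, so $\rup{p'}{G}=\rad{p}{G}\times K$ for some $K\lhd G$ with $\rad{p'}{K}$ solvable and $K/\rad{p'}{K}$ a direct product of non-abelian simple groups of Lie type. Discarding the additional information that those simple factors carry no $\sigma$-invariant character of degree divisible by $p$ yields exactly the statement to be proved.
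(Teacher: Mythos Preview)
Your approach is exactly the paper's: choose $\sigma$ so that its restriction to each $\Q_{|G|_r}$ is nontrivial precisely when $p$ divides $[\Q_{|G|_r}:\Q]$, assert that for this $\sigma$ a character $\chi$ is $\sigma$-invariant if and only if $p\nmid[\Q(\chi):\Q]$, and apply Theorem~\ref{result:main}. You add two things the paper's one-paragraph derivation omits: the degenerate case where $\gal(\Q_{|G|}/\Q)$ has no element of order $p$ (handled cleanly via Ito--Michler), and an explicit acknowledgement that the implication $\chi^\sigma=\chi \Rightarrow p\nmid[\Q(\chi):\Q]$ is the step requiring real work.

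Your caution there is justified: that implication is in fact \emph{false} for the chosen $\sigma$. Take $G=C_{p^3}$; then $\gal(\Q_{|G|}/\Q)$ is cyclic of order $p^2(p-1)$, its unique subgroup of order $p$ is $\gal(\Q_{p^3}/\Q_{p^2})$, and hence every $\sigma$ of order $p$ fixes $\Q_{p^2}$. A linear character $\chi$ of order $p^2$ is therefore $\sigma$-invariant while $[\Q(\chi):\Q]=p(p-1)$ is divisible by $p$. More generally, whenever $p^3$ divides $|G|$ the component $\sigma_p$ necessarily fixes $\Q_{|G|_p/p}$, so the projection argument you sketch cannot force $P_p\le H$ from $\sigma_p\in H$ at the prime $r=p$. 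Since this is exactly the direction needed to pass from the Corollary's hypothesis to that of Theorem~\ref{result:main}, both your argument and the paper's brief derivation share the same gap. (In the counterexample $\chi(1)=1$, so the hypothesis of Theorem~\ref{result:main} is not itself violated; what is missing is a route from the Corollary's hypothesis to that of Theorem~\ref{result:main} that does not go through the false biconditional.)
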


This may be compared with the results and conjectures in \cite{Navarro-Tiep:The_fields_of_values}. In fact, if $G$ is a finite group and $p$ a prime number, in \cite{Navarro-Tiep:The_fields_of_values} is conjectured that, for $\chi \in \irr(G)$, if $p \nmid \chi(1)$, then, roughly speaking, the $p$-part of $[\Q(\chi) : \Q]$ is relatively \textit{large}. It is also proved that, if $p \mid \chi(1)$, then $[\Q(\chi) : \Q]$ can be any value (see \cite[Theorem~2.2]{Navarro-Tiep:The_fields_of_values}). What we prove here is that, if $G$ is a finite group and $p$ an odd prime number, then either the thesis of our Theorem~\ref{result:main} holds, or there exists at least one character $\chi \in \irr(G)$ such that $p \mid \chi(1)$ and the $p$-part of $[\Q(\chi) : \Q]$ is trivial. Of course, this does not contradict \cite[Theorem~2.2]{Navarro-Tiep:The_fields_of_values}, since our theorem guarantees the existence of merely \emph{one} irreducible character with said characteristics in the group.

Essential for proving our Theorem~\ref{result:main} is to categorize, at east partially, the simple groups which have a $\sigma$-invariant character of degree divisible by $p$, for every prime number $p$ dividing the order of the group and for every Galois automorphism $\sigma$ of order $p$. The following theorem partially classifies the finite non-abelian simple groups with a rational character of degree divisible by a prime number $p$. The theorem is due to Gunter Malle, and it is published here with his consent.

For coprime integers $p,q$, we let $e_p(q)$ denote the multiplicative order of $q$ modulo~$p$.

\begin{result}
\label{result:simple_rational}
 Let $S$ be a finite simple group of Lie type and $p>2$ a prime divisor
 of~$\abs{S}$. If all rational characters of $S$ have $p'$-degree, then
 one of the following statements is true:
\begin{enumerate}[\rm(1)]
\item $S=\tw2B_2(q^2)$ with $p \mid (q^4+1)$,
\item $S=\PSL_n(q)$ with $e_p(q)=1$ and $p>n$,
\item $S=\PSU_n(q)$ with $e_p(q)=2$ and $p>n$,
\item $S=\PSL_n(q)$ with $n=2$, or $(n,e_p(q)) = (4,2)$ or
$(n,p,e_p(q))=(3,3,1)$,
\item $S=\PSU_n(q)$ with $(n,e_p(q)) = (4,1)$ or
$(n,p,e_p(q))=(3,3,2)$,
\item $S=\PSO_{2n}^+(q)$ with $(n,e_p(q))\in\{(5,2),(7,2)\}$, or
\item $S=\PSO_{2n}^-(q)$ with
$$(n,e_p(q))\in\{(4,1),(4,2),(4,4),(5,1),(6,1),(6,2),(7,1),(8,1),(8,2)\}.$$
\end{enumerate}
\end{result}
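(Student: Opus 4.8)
The plan is to go through the finite simple groups of Lie type one family at a time, writing each as $S=\bG^F/\ze{\bG^F}$ with $\bG$ simple and simply connected, $F$ a Steinberg endomorphism and $q$ the associated prime power, and to produce --- for every $S$ that does \emph{not} appear in the list --- a rational-valued $\chi\in\Irr(S)$ with $p\mid\chi(1)$. The first reduction is immediate: the Steinberg character $\op{St}$ of $\bG^F$ is rational-valued, has $\ze{\bG^F}$ in its kernel (so it lies in $\Irr(S)$), and has degree $\abs{\bG^F}_\ell$ with $\ell$ the defining characteristic; hence $p$ must be a non-defining prime, and from now on $d:=e_p(q)$ is well defined and $p\mid\Phi_d(q)$.

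The main tool is Lusztig's Jordan decomposition, together with the known behaviour of fields of values of its ingredients. Every $\chi\in\Irr(\bG^F)$ has the form $\chi_{s,\mu}$ for a semisimple element $s\in\bG^{*F}$ of the dual group and a unipotent character $\mu$ of $\ce{\bG^*}{s}^F$, with $\chi_{s,\mu}(1)=[\bG^{*F}:\ce{\bG^*}{s}^F]\,\mu(1)$, and a Galois automorphism acts on the label $(s,\mu)$ through a power map on $s$ and an action on $\mu$. Unipotent characters are rational-valued except for the cuspidal unipotent characters of the Suzuki and Ree series $\tw2B_2$, $\tw2G_2$, $\tw2F_4$; so outside those series $\chi_{s,\mu}$ is rational once the $\bG^{*F}$-class of $s$ is rational and $\mu$ is $\bG^{*F}$-stable --- with the proviso that when $\ce{\bG^*}{s}$ is disconnected one must follow the component group $\ce{\bG^*}{s}/\ce{\bG^*}{s}^\circ$, which controls both how $\chi_{s,\mu}$ splits and whether the constituents are individually rational, and that the central character must be trivial for the character to descend to $S$. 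This reduces the problem, type by type, to two questions: (i) does $\bG^F$ possess a rational unipotent character of degree divisible by $p$; and, if not, (ii) is there a rational label $(s,\mu)$ with $p\mid[\bG^{*F}:\ce{\bG^*}{s}^F]\,\mu(1)$?

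Step (i) is handled with the explicit generic degrees. A unipotent character is automatically trivial on the centre, hence descends to $S$; and, after excluding the finitely many primes dividing the bounded denominators of the generic degrees, $p$ divides the degree of a unipotent character exactly when $\Phi_d$ divides its generic degree. For classical types this is a purely combinatorial condition on $d$-cores of partitions (type $A$) or of symbols (types $B$, $C$, $D$, $\tw2A$, $\tw2D$) --- in type $\tw2A$ read off via Ennola duality $q\mapsto -q$ --- and it fails simultaneously for all unipotent characters only for a short list of pairs (type, $d$), which is what accounts for (2)--(7) on the unipotent side; for the exceptional types one reads off from the tables of unipotent degrees that a rational unipotent character of $p$-divisible degree is always present. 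When (i) fails, step (ii) takes over, and a clean source of rational characters there is the semisimple character $\chi_s$ attached to an involution $s$ of $\bG^*$ with connected centraliser: its class is automatically rational, and $[\bG^{*F}:\ce{\bG^*}{s}^F]$ is divisible by $p$ unless every involution centraliser in $\bG^*$ is ``$\Phi_d$-large''. This settles the orthogonal groups not on the list and all exceptional and twisted exceptional types; the Suzuki and Ree groups are argued directly from the character table --- in $\tw2B_2(q^2)$ with $p\mid q^4+1$ every character of $p$-divisible degree is a discrete-series or exceptional character and these are all irrational, which is why it stays in case (1), whereas for $\tw2G_2(q^2)$ and $\tw2F_4(q^2)$ a rational character of $p$-divisible degree (for instance the semisimple character of a suitable involution) is available for every admissible $p$.

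The hard part is step (ii) for $\SL_n(q)$ and $\SU_n(q)$ with $p>n$ and for the small-rank classical groups surviving into (4)--(7): the involution trick gives nothing there, since every involution centraliser is Levi-like and hence $\Phi_d$-large, and one must instead analyse regular semisimple $s$ lying in Coxeter-type or otherwise ``$\Phi_d$-small'' tori. For such $s$ three constraints compete at once --- the $\bG^{*F}$-class of $s$ must be rational, which bounds the order of $s$; the centraliser $\ce{\bG^*}{s}$ may be disconnected, so a rational class can split into characters with genuinely irrational fields of values, with quadratic fields of the form $\Q(\sqrt{\pm q})$ as the typical obstruction; and the central character must be trivial so that the character lives on $S$ itself rather than on an isogenous group. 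The families in the statement are precisely those for which these constraints cannot be reconciled uniformly in $q$, which is also why the resulting classification is only a partial one; carrying this out --- in particular pinning down the exact pairs $(n,e_p(q))$ for $\PSL_n$ and $\PSU_n$ and the short lists for $\PSO^+_{2n}$ and $\PSO^-_{2n}$ --- via the combinatorics of symbols and an explicit analysis of centralisers and their component groups is the computational core of the argument.
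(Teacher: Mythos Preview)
Your overall strategy---Steinberg character for the defining prime, then rational unipotent characters via $d$-cores and symbols, then semisimple characters for the survivors---is exactly the paper's. But you have the weight distribution backwards. In the paper, step~(i) is essentially the whole proof: Proposition~\ref{proposition:unip_rational} carries the $d$-core/cocore and $d$-Harish-Chandra combinatorics through every classical family in detail (and reads off the exceptional tables) and already reduces to the final list \emph{plus exactly two extra cases}, $\PSL_3(q)$ with $e_p(q)=3$ and $\PSU_3(q)$ with $e_p(q)=6$. Step~(ii) then takes five lines (Proposition~\ref{proposition:some_cases}): an involution in $\PGL_3(q)$ for $q$ odd, or a rational order-$3$ element for $q$ even, yields a rational semisimple character of degree divisible by $\Phi_3(q)$, and Ennola handles $\PSU_3$. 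No analysis of regular semisimple elements in Coxeter-type tori, disconnected centralisers, component groups, or quadratic-field obstructions is needed anywhere.

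Your final paragraph misreads the statement. The theorem is a one-way implication: for every $S$ \emph{not} on the list you must produce a rational character of $p$-divisible degree, but you are never asked to show that the groups on the list have none---indeed the paper remarks that the list need not be sharp. So there is no ``hard part'' in which the semisimple constraints ``cannot be reconciled'' for the families in (2)--(7); once the unipotent combinatorics is done, those families are simply what remains, and the proof is finished. (A small side correction: unipotent characters are not all rational outside the Suzuki--Ree series---$E_7$ and $E_8$ also have irrational cuspidal unipotents---so the paper only invokes full rationality of unipotent characters for the classical types, while for exceptional types it just reads a suitable rational one off the degree tables.)
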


Of course, the thesis of Theorem~\ref{result:simple_rational} is stronger that what we actually need, and it is possible to find a simple group $S$, of order divisible by $p$, which does not have any rational character of degree divisible by $p$ and, nevertheless, it has a $\sigma$-invariant character of degree divisible by $p$ for every choice of $\sigma \in \gal(\mathbb{Q}_{\abs{S}}/\mathbb{Q})$ of order $p$. In fact, $S=\PSL_3(4)$ is an example of this kind of groups, for $p=3$, since it has no rational irreducible characters of degree divisible by $3$ and, nevertheless, it has only $2$ irreducible characters of degree equal to $45$ and only $2$ of degree equal to $63$, which are therefore fixed by any Galois automorphism of order $3$. In general, if $S = \PSL_3(q)$ such that $3 \mid q-1$, and $p=3$, we prove in Proposition~\ref{proposition:PSL3} that $S$ always has a $\sigma$-invariant character of degree divisible by $p$, despite appearing as an exception in Theorem~\ref{result:simple_rational}.

We should mention here that it is particularly difficult to find examples of simple groups of Lie type of rank higher then $1$ and such that $p$ does not divide the degree of any $\sigma$-invariant character. In fact, for example, if $G=\op{SL}_n(q)$ and $r$ is a primitive prime divisor of $q^n - 1$, then $G$ has a semisimple element $s$ of order $r$ and such that $q-1$ divides $\abs{G:\ce{G}{s}}$ (see for instance \cite[Lemma~2.4]{Moreto-Tiep:Prime_divisors}), and the Deligne-Lusztig theory suggests the existence of an irreducible character in the dual group of degree $\abs{G:\ce{G}{s}}$ (or possibly a fraction of $\abs{G:\ce{G}{s}}$) having values in $\Q_r$. Hence, if a prime number $p$ divides $q-1$ and not $r-1$, the group has an irreducible character of degree divisible by $p$ and fixed by every Galois automorphism $\sigma$ of order $p$. Now, it is possible to find a group $G=\op{PSL}_n(q)$ and a prime number $p$ such that $p \mid q-1$ and $p \mid r-1$ for every primitive prime divisor $r$ of $q-1$, however, the smallest example we could find of rank higher then $1$ is $\op{PSL}_4(11)$, with $p=5$, which is itself quite large and, nevertheless, we verified that it has rational characters of degree divisible by $p$. This shows that it is computationally difficult to find high-rank simple groups which verify the hypotheses of our theorem.

Finally, as a by-product of some of the results we use in order to prove our Theorem~\ref{result:main}, we are able to prove the following theorem on groups where all rational irreducible characters have odd degree. Notice that groups with these characteristics were already studied in \cite{Tiep-TongViet:Odd-degree_rational}.

\begin{result}
\label{result:rational}
Let $G$ be a finite non-solvable group and suppose that all the rational characters of $G$ have odd degree. Suppose also that there is no subgroup $N \lhd \rup{2'}{G}$ such that $\rup{2'}{G}/N \cong D_{10}$, the dihedral group of $10$ elements. Then, there exists $L \lhd G$ solvable such that $L < \rup{2'}{G}$ and $\rup{2'}{G}/L \cong S_1 \times ... \times S_k$, with $S_i \cong \op{PSL}_2(3^{f_i})$, for some odd $f_i \geq 3$, for each $i=1,...,k$.
\end{result}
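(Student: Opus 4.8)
The plan is to specialise the machinery used to prove Theorem~\ref{result:main} to the prime $p=2$, feeding in the weaker hypothesis that all \emph{rational} characters of $G$ have odd degree in place of the $\sigma$-invariant one; this still forces the non-abelian composition factors of $\rup{2'}{G}$ to lie in the (essentially known) list of finite simple groups all of whose rational characters have odd degree, and a structural argument then pins down $\rup{2'}{G}$ modulo its solvable radical. Write $R=\rup{2'}{G}$. Since $G/R$ has odd order it is solvable by the Feit--Thompson theorem, so $R$ is non-solvable; also $\rup{2'}{R}=R$, and hence $\rup{2'}{R/N}=R/N$ for every $N\lhd R$.

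The first main step is to show that every non-abelian composition factor $S$ of $R$ is isomorphic to $\PSL_2(3^f)$ for some odd $f\ge 3$. Given a non-abelian chief factor $V/W\lhd G$ with $V/W\cong S^n$, one uses the transfer argument already developed for Theorem~\ref{result:main}: if $S$ had a rational irreducible character of even degree, one could take it $\op{Aut}(S)$-invariant, so that the corresponding external power in $\Irr(V/W)$ is $G$-invariant, and then extending it through the inertia group and invoking the Galois-equivariance of the Gallagher correspondence would yield a rational irreducible character of $G$ of even degree, against the hypothesis. Hence $S$ has all rational characters of odd degree, so by the analysis of such simple groups in \cite{Tiep-TongViet:Odd-degree_rational} --- the $p=2$ counterpart of Theorem~\ref{result:simple_rational} --- together with the standard character theory of $\PSL_2(q)$, one gets $S\cong\PSL_2(3^f)$; and $f$ must be odd, since for $f$ even one has $3^f\equiv 1\pmod 8$, so the split torus of order $(3^f-1)/2$ has an element of order $4$ and produces a rational principal-series character of the even degree $3^f+1$. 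Finally $f\ge 3$, as $\PSL_2(3)$ is solvable.

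Next, let $L$ be the solvable radical of $R$; it is characteristic in $R\lhd G$, hence normal in $G$, and $L<R$ as $R$ is non-solvable. Set $\bar R=R/L$ and $M=\op{soc}(\bar R)$; since $\bar R$ has trivial solvable radical, $\ce{\bar R}{M}=1$, so $M\le\bar R\le\op{Aut}(M)$, with $M=S_1\times\dots\times S_k$ and each $S_i\cong\PSL_2(3^{f_i})$, $f_i$ odd $\ge 3$. It suffices to prove $\bar R=M$, for then $R/L\cong\prod_iS_i$ and we are done (and $L$ is then forced to be the solvable radical, but we do not need this). Suppose $\bar R\ne M$. Using $\rup{2'}{\bar R/M}=\bar R/M$, the embedding of $\bar R/M$ into the product of the groups $\op{Out}(\PSL_2(3^{f_i}))\cong C_2\times C_{f_i}$ twisted by permutations of the isomorphic factors, the oddness of the $f_i$ (which kills the field-automorphism contributions), and the first step once more (which bounds the composition factors of the permutation image), one reduces to $\bar R$ being built, modulo $M$, from $\PGL_2(3^{f_i})$-type diagonal extensions together with a tame permutation action. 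In each such residual configuration I would again manufacture a rational irreducible character of $G$ of even degree --- exploiting that $\PGL_2(3^f)$ has a rational character of degree $3^f\pm 1$, that induction commutes with the Galois action, and the same promotion device as above --- contradicting the hypothesis; the single exception is the configuration that collapses to a quotient of $R$ isomorphic to $D_{10}$ (a $2$-perfect solvable group all of whose rational characters have odd degree), which is precisely what the extra hypothesis forbids. Hence $\bar R=M$, as required.

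The principal difficulties I anticipate are twofold. First, making the ``promotion'' of an even-degree rational character from a section up to $G$ entirely rigorous: rational characters do not restrict to rational characters, so one must work throughout with canonically or automorphism-stably chosen characters and with the Galois-equivariance of Clifford theory. Second, pushing the case analysis of the previous paragraph far enough to confirm that the dihedral quotient $D_{10}$ is genuinely the \emph{only} residual configuration that cannot be eliminated by exhibiting a rational character of even degree --- which is exactly what forces the dihedral hypothesis into the statement.
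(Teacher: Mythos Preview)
Your approach is structurally different from the paper's: you pass to the solvable radical $L$ of $R=\rup{2'}{G}$ and try to argue that $R/L$ coincides with its socle, whereas the paper runs a straightforward induction on $|G|$ through a minimal normal subgroup $M$ and invokes Proposition~\ref{proposition:rational} at the key step. The paper's route is much shorter.

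There is a genuine gap in your first step. You claim that if a non-abelian composition factor $S$ has a rational irreducible character of even degree, ``one could take it $\op{Aut}(S)$-invariant'' and then promote it to a rational even-degree character of $G$. Neither half is justified: an arbitrary rational $\delta\in\Irr(S)$ need not be $\op{Aut}(S)$-invariant, and even when it is, it need not extend \emph{rationally} to $\op{Aut}(S)$, which is exactly what Lemma~\ref{lemma:extension} requires for the tensor-induction construction to output a rational character of $G$. The paper never attempts to lift an arbitrary rational character of a simple section. Instead, Proposition~\ref{proposition:rational} works with one specific character (the Steinberg character, or an analogue for sporadic and alternating groups) chosen precisely because it is known in advance to extend rationally to $\op{Aut}(S)$; combined with the $2$-concealed classification of \cite{Giudici-Liebeck-Praeger-Saxl-Tiep:Arithmetic_results}, this yields a clean trichotomy for a non-abelian minimal normal $M$: either $G$ already has a rational character of even degree, or $G/N$ is almost simple with socle $MN/N$, or $G/N\cong\op{Alt}_k$ or $D_{10}$. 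The identification $S\cong\PSL_2(3^f)$ is then obtained not by constraining $S$ directly but by applying \cite[Theorem~B]{Tiep-TongViet:Odd-degree_rational} to the almost-simple quotient $G/N$ (with $\PGL_2(3^f)$ eliminated via its rational character of degree $3^f-1$), which forces $G=N\times M$ and closes the induction. Your second step is likewise not yet an argument: the ``residual configurations'' you allude to, and the emergence of $D_{10}$ as the sole unavoidable obstruction, are precisely the content of the $2$-concealed classification packaged inside Proposition~\ref{proposition:rational}, and you have not supplied any substitute for that analysis.
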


This result may be compared with \cite[Theorem~C]{Dolfi-Malle-Navarro:Grous_no_real_p-elements}, also reported as \cite[Theorem~3.2]{Tiep-TongViet:Odd-degree_rational}, and it underlines the relation between rational characters of odd degree and rational (or real) elements of odd order. The topic is also studied by the author in \cite{Grittini:Odd-degree_rational}.

\section{Simple groups and rational characters}
\label{section:simple_groups}

The results in this section are due to Gunter Malle, and they are published here with his consent. The two propositions in this section, taken together, prove our Theorem~\ref{result:simple_rational}.

For coprime integers $p,q$ we let $e_p(q)$ denote the multiplicative order
of $q$ modulo~$p$.
 
For the following result note that unipotent characters of finite reductive
groups $G$ have the centre $\ze{G}$ in their kernel and so descend to irreducible
characters of $G/\ze{G}$, which by a slight abuse of notation we will also call
unipotent.

\begin{proposition}
\label{proposition:unip_rational}
 Let $S$ be a finite simple group of Lie type and $p>2$ a prime divisor
 of~$\abs{S}$. If all rational unipotent characters of $S$ have $p'$-degree, then
 one of the following statements is true:
\begin{enumerate}[\rm(1)]
\item $S=\tw2B_2(q^2)$ with $p \mid (q^4+1)$,
\item $S=\PSL_n(q)$ with $e_p(q)=1$ and $p>n$,
\item $S=\PSU_n(q)$ with $e_p(q)=2$ and $p>n$,
\item $S=\PSL_n(q)$ with $n=2$, or $(n,e_p(q))\in\{(3,3),(4,2)\}$ or
$(n,p,e_p(q))=(3,3,1)$,
\item $S=\PSU_n(q)$ with $(n,e_p(q))\in\{(3,6),(4,1)\}$ or
$(n,p,e_p(q))=(3,3,2)$,
\item $S=\PSO_{2n}^+(q)$ with $(n,e_p(q))\in\{(5,2),(7,2)\}$, or
\item $S=\PSO_{2n}^-(q)$ with
$$(n,e_p(q))\in\{(4,1),(4,2),(4,4),(5,1),(6,1),(6,2),(7,1),(8,1),(8,2)\}.$$
\end{enumerate}
\end{proposition}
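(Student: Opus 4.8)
The plan is to run through the finite simple groups of Lie type, type by type, using Lusztig's explicit description of the unipotent characters together with the known classification of which unipotent characters fail to be rational-valued. First I would dispose of the case $p\mid q$: the Steinberg character is a rational unipotent character of degree $q^N$, with $N$ the number of positive roots of the ambient algebraic group $\bG$, so $p\mid q$ already violates the hypothesis; hence $p\nmid q$ and $e:=e_p(q)$ is well defined, with $p\mid\Phi_e(q)$. The central reduction is then the following: since $\Phi_d(q)$ is divisible by $p$ only for $d\in\{e,ep,ep^2,\dots\}$, and $\Phi_{ep^i}(q)$ with $i\ge1$ divides $\abs S$ only in a handful of small-$p$, small-rank situations, the hypothesis is, away from those finitely many extra checks, equivalent to the statement that \emph{every rational unipotent character of $S$ has degree prime to $\Phi_e(q)$}. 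By the $\Phi_e$-Harish--Chandra theory of Broué--Malle--Michel, the unipotent characters $\rho\in\Uch(S)$ with $\Phi_e(q)\nmid\rho(1)$ are exactly those lying in a $\Phi_e$-series over an $e$-split Levi $\bL=\ce\bG{\mathbf S}$, $\mathbf S$ a Sylow $\Phi_e$-torus, and these are parametrised by $\Irr$ of the associated relative Weyl group $\no\bG{\mathbf S}/\ce\bG{\mathbf S}$ (the principal $\Phi_e$-series, together with one further series for each $e$-cuspidal unipotent character of $\bL$). So the hypothesis forces every unipotent character \emph{not} of this form to be non-rational.

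I would then split according to the type of $S$. For the classical types $A$, $\tw2A$, $B$, $C$, $D$, $\tw2D$ every unipotent character is rational-valued — they are parametrised by partitions or Lusztig symbols and have integral generic degree — so the hypothesis forces \emph{all} of $\Uch(S)$ to lie in the $\Phi_e$-series over $\ce\bG{\mathbf S}$; comparing $\abs{\Uch(S)}$ with the total number of irreducible characters of the relevant relative Weyl groups (imprimitive complex reflection groups such as $G(e,1,a)$ and $G(e,e,a)$) pins this down to the low-rank configurations in (2)--(7). Concretely, for $S=\PSL_n(q)$ the $q$-analogue of the hook-length formula for the unipotent character degrees of $\GL_n(q)$ shows that for $e=1$ every unipotent degree is already prime to $\Phi_1(q)$, and $p$ divides such a degree exactly when $\lfloor n/p\rfloor$ exceeds the number of $p$-divisible hook lengths of the relevant partition; this happens for some (rational) unipotent character precisely when $p\le n$, except for the collapse at $(n,p)=(3,3)$, giving case (2) ($e_p(q)=1$, $p>n$) together with the exceptions $n=2$ (any $e$) and $(n,p,e)=(3,3,1)$. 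The unitary groups are handled by the Ennola-dual computation (which interchanges $e$ with its even/odd counterpart), and the orthogonal groups $\PSO^{\pm}_{2n}(q)$ by the same bookkeeping with Lusztig symbols, where the richer supply of $e$-cuspidal unipotent characters and of degenerate symbols produces the longer lists in (6) and (7).

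For the exceptional and very-twisted types $\tw2B_2$, $\tw2G_2$, $\tw2F_4$, $G_2$, $\tw3D_4$, $F_4$, $E_6$, $\tw2E_6$, $E_7$, $E_8$ I would instead use the explicit tables of unipotent character degrees together with the short, explicit list of \emph{irrational} unipotent characters — the cuspidal ones whose Frobenius eigenvalue is a primitive root of unity different from $\pm1$, such as $G_2[\theta]$, $F_4[i]$ or $E_8[\zeta_5]$ — while recording that in the Ree--Suzuki types $\tw2B_2$, $\tw2G_2$, $\tw2F_4$ even the cuspidal unipotent characters are rational-valued, since the would-be irrationality $\sqrt q$ is itself rational there. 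For each admissible pair $(S,e)$ one crosses out the unipotent characters with $\Phi_e(q)\mid\rho(1)$ and asks whether the rational survivors all have $p'$-degree; this only succeeds for $S=\tw2B_2(q^2)$ (case (1)): its four unipotent characters are all rational, with the two cuspidal ones of degree $\tfrac1{\sqrt2}q(q^2-1)$, so the hypothesis holds exactly when $p$ does not divide $q(q^2-1)$, i.e. when $p\mid q^4+1$. For every other exceptional or very-twisted $S$ one exhibits, for each $e$, a rational unipotent character (such as an $\varepsilon$-type character or a rational cuspidal one) whose generic degree is divisible by $\Phi_e(q)$.

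The main obstacle will be the orthogonal groups $\PSO^{\pm}_{2n}(q)$, together with the bookkeeping of the auxiliary factors $\Phi_{ep^i}$. For $\PSO^{\pm}_{2n}(q)$ the unipotent characters and their generic degrees are encoded by pairs of $\beta$-sets, with degenerate symbols splitting into two characters, the relative Weyl groups of the $\Phi_e$-tori are the complex reflection groups $G(e,1,a)$ or $G(e,e,a)$, and one has to check, for each admissible $(n,e)$, that no symbol outside the principal $\Phi_e$-series has $p'$-generic degree — this is exactly where the explicit numerology in (7) comes from. Disentangling when a small prime $p$, with $ep$ (or $ep^2$) still below the relevant rank bound, slips an extra factor of $p$ into an otherwise $\Phi_e$-coprime degree is the second delicate point, and is why the statement has to pin down $p=3$ separately in configurations such as $(n,p,e)=(3,3,1)$ and $(3,3,2)$.
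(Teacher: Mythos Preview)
Your plan tracks the paper's strategy closely: the Steinberg character disposes of the defining prime, the exceptional types are handled from the explicit degree tables, and for the classical types one uses that all unipotent characters are rational together with $d$-Harish--Chandra theory. The organisational difference is that you phrase the classical-type step as a counting problem --- compare $|\Uch(S)|$ with the number of irreducibles of the relative Weyl groups attached to the centraliser of a Sylow $\Phi_e$-torus --- whereas the paper constructs, for each relevant $d$, a unipotent character of $p$-defect zero labelled by a $d$-core partition (type~$A$) or a $d$-core/$d$-cocore symbol (types $B$, $C$, $D$), and then propagates via $d$-Harish--Chandra series.

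These two formulations are contrapositives of the same combinatorial question, and that is exactly where your outline has a genuine gap. In type~$A$ your count matches if and only if there is no $e$-core of size $s$ with $e\le s\le n$ and $s\equiv n\pmod e$, so ``comparing the numbers'' still requires knowing when such cores exist. The paper's key external input is the Granville--Ono theorem (for every $d\ge 4$ and every $n\ge d$ there is a $d$-core of $n$), supplemented by ad hoc arguments for $d\le 3$; you name no substitute for this, and without it the comparison cannot be carried out. The same omission leaves types $B_n$ and $C_n$ untreated --- they produce \emph{no} exceptions, which the paper establishes by explicit bipartition and $\beta$-set constructions, including a concrete passage from $e$-cores of $n$ to $e$-cocore symbols when $d=2e$ is even. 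Two smaller points: your claim that $\Phi_{ep^i}$ with $i\ge 1$ divides $|S|$ only in a ``handful'' of cases is false for $e=1$ (then $\Phi_p\mid|S|$ whenever $p$ is at most the rank), though you do patch this for $\PSL_n$; and not all cuspidal unipotent characters of $\tw2F_4$ are rational-valued, contrary to what you assert, though this does not affect the conclusion since one only needs \emph{some} rational unipotent of the right degree.
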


\begin{proof}
For the Tits simple group this is immediate from the known character table,
so we may now assume that $S=G/\ze{G}$ for $G$ the group of fixed points under a
Steinberg map of a simple algebraic group of simply connected type. By our
prior remark we may argue with the unipotent characters of $G$ in place of $S$.
If $p$ is the defining prime of~$G$, then the Steinberg character of $G$, of
degree a power of $p$ \cite[Proposition~3.4.10]{Geck-Malle:Character_Theory}, is as required. So now assume
$p$ is not the defining prime. If $G$ is of exceptional type, it is immediate
from the known degrees (listed e.g.~in \cite[Section~13.9]{Carter:Finite_groups_of_Lie_type}) that there do exist
rational unipotent characters of degree divisible by~$p$ for any non-defining
prime $p$, unless when $G$ is a Suzuki group $\tw2B_2(q^2)$ and $p$ divides
$q^4+1$.   \par
For the groups of classical type we use that all unipotent characters are
rational, see \cite[Corollary~4.5.6]{Geck-Malle:Character_Theory}. Also recall from
\cite[Proposition~4.5.9]{Geck-Malle:Character_Theory} that degrees of unipotent characters are products of
cyclotomic polynomials $\Phi_d$ (evaluated at $q$), possibly divided by a
2-power (which does not matter for our purpose here since we assume $p>2$).
Furthermore, as a consequence of $d$-Harish-Chandra theory, any unipotent
character lying in the $d$-Harish-Chandra series above a $d$-cuspidal
unipotent character $\rho$ has at least the same $\Phi_d$-part in its generic
degree as $\rho$ \cite[Corollary~4.6.25]{Geck-Malle:Character_Theory}.

Now first assume $G=\SL_n(q)$ with $n\ge3$. Let $d=e_p(q)$ be the order of $q$
modulo~$p$. Note that $d\le n$ if $p$ divides $\abs{G}$. If $\lambda$ is a $d$-core
partition of $n$, then by the degree formula \cite[Proposition~4.3.1]{Geck-Malle:Character_Theory} the
degree of the unipotent character labelled by $\lambda$ is divisible by
$\Phi_d(q)$ and hence by $p$. According to the main result of \cite{Granville-Ono:Defect_zero_p-blocks} there
is such a $d$-core for any integer $n\ge d$ when $d\ge4$. So now assume $d\le3$.
There exist 3-cores $\la_n\vdash n$ for $n=4,5,6$. Now if $n\equiv t\pmod3$ with
$t\in\{4,5,6\}$ then all unipotent characters in the $3$-Harish-Chandra series
of $G$ above the 3-core $\la_t$ have degree divisible by $\Phi_3(q)$, hence
by~$p$. This only leaves the case $n=3$ when $d=3$. There are 2-core partitions
$\la_n\vdash n$ for $n=3,6$, and since any unipotent character in a
2-Harish-Chandra series above the characters labelled by these cores has degree
divisible by~$p$, we are left with $n=4$ when $d=2$. Finally, in the case $d=1$
assume that $p\le n$. Then $p$ divides $\Phi_p(q)$ and by our previous
considerations there is a unipotent character of degree divisible by~$p$, unless
possibly when $n=3$.
\par
Next consider $G=\SU_n(q)$ with $n\ge3$ and let $d=e_p(-q)$. Since the degree
formula for unipotent characters of $G$ is obtained from the one for $\SL_n(q)$
by replacing $q$ by $-q$ (and adjusting the sign if necessary)
\cite[Proposition~4.3.5]{Geck-Malle:Character_Theory}, our investigation for $\SL_n(q)$ only leaves us with
the cases $(n,d)\in\{(3,6),(4,1)\}$ and $d=2$, $p>n$, as listed in~(3).
\par
Now let $G_n=\Sp_{2n}(q)$ or $\SO_{2n+1}(q)$ with $n\ge2$ and set $d=e_p(q)$.
If $d$ is odd, the for any $d$-core $\la\vdash n$ the unipotent character
labelled by the symbol of defect~1 for the bipartition $(\la;-)$ is of
$p$-defect zero \cite[Corollary~4.4.18]{Geck-Malle:Character_Theory}. So as before we only need to concern
ourselves with the cases $d=1$ and $d=3$. Now the cuspidal unipotent character
of $\Sp_4(q)$ has degree divisible by $\Phi_1(q)$, and so has every unipotent
character of $G_n$, $n\ge3$, in its Harish-Chandra series. Moreover, the degrees
of the unipotent characters of $G_n$, $n=3,4,5$, labelled by the bipartitions
$(2;1)$, $(31;-)$, respectively $(31;1)$, are divisible by $\Phi_3(q)$. Now
assume
$d=2e$ is even. If $e$ is odd we use the fact (Ennola duality) that the set of
degree polynomials of $G_n$ is invariant under replacing $q$ by $-q$ (and
adjusting the sign), so if there exists a unipotent character of $G_n$ of degree
divisible by $\Phi_e(q)$, then there is also one of degree divisible by
$\Phi_{2e}(q)$. If $e$ is even, then let $\la=(\la_1\le\ldots\le\la_r)\vdash n$
be an $e$-core, and $\be=(\be_1,\ldots,\be_r)$ with $\be_i:=\la_i+i-1$ be
the corresponding $\be$-set. Let $\gamma=(0,\ldots,r-2)$, a $\be$-set for the
empty partition. Let $S$ be the symbol whose first row consists of the
$\be_i$ with $\be_i\pmod{2e}\in\{0,\ldots,e-1\}$ as well as the $\gamma_i$ with
$\gamma_i\pmod{2e}\in\{e,\ldots,2e-1\}$ (where we consider the smallest
non-negative residue), and the second row consisting of the remaining $\be_i$,
$\gamma_i$. Then $S$ has rank~$n$ and odd defect and by construction
is an $e$-cocore, hence it labels a unipotent character of $G_n$ of $p$-defect
zero \cite[Corollary~4.4.18]{Geck-Malle:Character_Theory}. This leaves the case $e=2$ for which there might
be no $e$-core of $n$. But here the unipotent characters of $G_n$, $n=2,3$,
labelled by the symbols $\binom{0\ 1}{2}$, $\binom{0\ 1}{3}$ respectively have
degree divisible by~$\Phi_4(q)$, so by~$p$, and we are again done by
$d$-Harish-Chandra theory.
\par
Finally we consider the even-dimensional orthogonal groups
$G_n^\pm=\Spin_{2n}^\pm(q)$ for $n\ge4$. Let $d=e_p(q)$.
If $d$ is odd, the principal series unipotent character labelled by a $d$-core
$\la\vdash n$ is as desired, again by \cite[Corollary~4.4.18]{Geck-Malle:Character_Theory}. For $d=1$ note
that $G_4^+$ has a cuspidal
unipotent character, hence of degree divisible by $\Phi_1(q)$, and so has
$G_9^-$. This leaves the cases in~(5). Let $d=2e$ be even. If $e$ is odd, the
claim follows again by Ennola duality from the case when $d$ is odd. If $e$ is
even, we construct a symbol of defect~0 or~4 which is an $e$-cocore from an
$e$-core of $n$ by the same procedure as for types $B_n$ and $C_n$. Again,
this leaves the case $e=2$. Here, the symbols
$$\binom{3}{1},\ \binom{1\ 5}{0\ 1},\ \binom{0\ 1\ 5}{1},\ \binom{1\ 5}{}$$
are 2-cocores for $G_4^+,G_5^+,G_5^-,G_6^-$ respectively, leaving the case
$(n,d)=(4,4)$ for $G_4^-$, listed in~(5).
\end{proof}

\begin{proposition}
\label{proposition:some_cases}
 Assume we are in one of the following cases of Proposition~{\rm\ref{proposition:unip_rational}}:
 \begin{enumerate}[\rm(1)]
  \item $S=\PSL_3(q)$ with $e_p(q)=3$, or
  \item $S=\PSU_3(q)$ with $e_p(q)=6$.
 \end{enumerate}
 Then $S$ has a rational irreducible characters of degree divisible by~$p$.
\end{proposition}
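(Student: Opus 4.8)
The plan is to obtain the required character as the restriction to $S$ of a semisimple Deligne--Lusztig character of $\SL_3(q)$ (in case~(1)), respectively of $\SU_3(q)$ (in case~(2)), attached to a carefully chosen \emph{rational} semisimple element of the dual group. Take case~(1) first; here $e_p(q)=3$ forces $p\mid\Phi_3(q)=q^2+q+1$ and $p\neq3$. I would work inside $G=\GL_3(q)$ and pick a semisimple $s\in G$ with multiset of eigenvalues $\{1,\zeta,\zeta^{-1}\}$, where $\zeta\in\overline{\FF}_q$ has order~$4$ if $q$ is odd and order~$3$ if $q$ is even (possible since the defining characteristic is not~$2$, resp.\ not~$3$); note $\det(s)=1$. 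The order $m$ of $s$ is then $3$ or $4$, so every unit of $\ZZ/m\ZZ$ is $\pm1$, so the power map $x\mapsto x^k$ ($\gcd(k,m)=1$) fixes $\{1,\zeta,\zeta^{-1}\}$; hence $s$ is a rational element of $G$. It follows that the semisimple character $\chi_s\in\Irr(G)$ is rational --- using that Deligne--Lusztig induction is Galois-equivariant, that unipotent characters of centralizers of semisimple elements of $G$ are rational, and that Jordan decomposition respects the Galois action. Computing $\ce{G}{s}$ gives $\chi_s(1)=(q+1)\Phi_3(q)$ if $\zeta\in\FF_q^\times$ and $\chi_s(1)=q^3-1=(q-1)\Phi_3(q)$ otherwise, so $p\mid\chi_s(1)$ either way.

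I then pass to $S=\PSL_3(q)$. Since $\chi_s\otimes\widehat\lambda=\chi_{sz_\lambda}$ for $z_\lambda$ ranging over $\ze{G}=\FF_q^\times$, the restriction $\chi_s|_{\SL_3(q)}$ is irreducible exactly when no nontrivial scalar fixes $\{1,\zeta,\zeta^{-1}\}$; this holds when $q$ is odd, and when $q$ is even with $3\nmid q-1$ (then $\zeta\notin\FF_q^\times$). In these cases $\theta:=\chi_s|_{\SL_3(q)}$ is an irreducible rational character, and it is trivial on $\ze{\SL_3(q)}\le\ze{G}$ since the central character of $\chi_s$ is $\widehat{\det(s)}=1$; thus $\theta\in\Irr(S)$ with $p\mid\theta(1)$, as wanted. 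The remaining possibility, $q$ even with $3\mid q-1$, is the genuine obstacle. Here $\ze{\SL_3(q)}\cong C_3$, the order-$3$ choice of $\zeta$ is the only one available (in characteristic~$2$ it is the only rational semisimple element of $G$ with three distinct eigenvalues), and $\chi_s$ is stabilized by $\mu_3(\FF_q)\le\FF_q^\times$; consequently $\chi_s|_{\SL_3(q)}=\theta_1+\theta_2+\theta_3$ with $\theta_j(1)=(q+1)\Phi_3(q)/3$, still divisible by $p$ as $p\neq3$, and each $\theta_j\in\Irr(S)$ by the same central-character computation. The $\theta_j$ are permuted transitively by the diagonal automorphism group $\PGL_3(q)/\PSL_3(q)\cong C_3$ and, $\chi_s$ being rational, also permuted by $\Gal(\QQ_{\abs{S}}/\QQ)$; what remains is to show they are rational.

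This last point is where I expect the real work to be. Writing $\bar s$ for the image of $s$ in the dual group $\bG^*$, the rationality of the $\theta_j$ is equivalent to the triviality of the Galois action on the three labels of the Lusztig series $\cE(\SL_3(q),\bar s)$ --- that is, on the characters of the component group $\ce{\bG^*}{\bar s}/\ce{\bG^*}{\bar s}^{\circ}\cong C_3$ --- and I would settle this via Lusztig's description of the Galois action under Jordan decomposition together with the explicit evaluation of the Gauss sums attached to $\bar s$, or, more directly, by inspecting the classically known generic character table of $\SL_3(q)$. Finally, case~(2) follows from case~(1) by Ennola duality: replacing $q$ by $-q$ turns $\GL_3$ into $\op{GU}_3$, $\Phi_3$ into $\Phi_6$, and $e_p(q)=3$ into $e_p(q)=6$, and the same construction with a rational semisimple element of $\op{GU}_3(q)$ with eigenvalues $\{1,\zeta,\zeta^{-1}\}$ yields a rational character of $\SU_3(q)$ of degree $(q\mp1)\Phi_6(q)$ (divided by $3$ in the exceptional case), divisible by $p$ and descending to $\PSU_3(q)$ by the identical argument.
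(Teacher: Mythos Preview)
Your strategy—produce a rational semisimple character from a rational semisimple element in the dual group—is exactly the paper's strategy, but you route it through $\GL_3(q)$ and then restrict, whereas the paper works directly with the dual pair $(\SL_3(q),\PGL_3(q))$. That difference is what creates the obstacle you flag and what the paper's setup avoids.

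Concretely, for $q$ odd the paper does not use an element of order~$4$. It takes an \emph{involution} $s\in\PGL_3(q)$ (the image of $\mathrm{diag}(-1,1,1)$), whose centraliser is $\GL_2(q)$, connected. The semisimple character of $\SL_3(q)$ in $\cE(\SL_3(q),[s])$ then has degree $|\PGL_3(q):\GL_2(q)|_{q'}=q^2+q+1=\Phi_3(q)$; it is rational because $s$ is rational and the centraliser is connected (so the Lusztig series is in bijection with the rational unipotent characters of $\GL_2(q)$), and it is trivial on $\ze{\SL_3(q)}$ because $s\in\PSL_3(q)$. This is shorter than your order-$4$ construction and never touches the restriction problem.

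For $q$ even the paper uses the same order-$3$ element you do, but again viewed directly as $\bar s\in\PGL_3(q)$. When $3\mid q-1$ the centraliser is $(q-1)^2.3$, disconnected with component group $C_3$, and the semisimple character of $\SL_3(q)$ attached to $\bar s$ has degree $\Phi_2(q)\Phi_3(q)/3$. Its rationality follows from the general fact that the semisimple character depends only on the $G^*$-conjugacy class of $s$ and satisfies $\sigma(\chi_s)=\chi_{s^k}$ for the Galois automorphism $\sigma$ corresponding to $k$; since $\bar s$ is rational, $\chi_{\bar s}$ is rational. In your language, this says that among your three constituents $\theta_1,\theta_2,\theta_3$ the one labelled by the trivial character of the component group $A(\bar s)\cong C_3$ is automatically rational—so you do not need all three to be rational, nor do you need Gauss sums or the explicit generic table. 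Your acknowledged gap is therefore genuine as written, but it closes immediately once you work in the correct dual group rather than via restriction from $\GL_3(q)$.
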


\begin{proof}
For $S=\PSL_3(q)$ with $q$ odd there is an involution $s$ in the group
$\PGL_3(q)$ with centraliser $\GL_2(q)$. The semisimple characters in the
Lusztig series of $\SL_3(q)$ labelled by $s$ have degree
$\abs{\PGL_3(q):\GL_2(q)}_{q'}=q^2+q+1$, they are rational as any involution is
rational, and they have $\ze{\SL_3(q)}$ in their kernel as $s\in\PSL_3(q)$.
If $q$ is even, let $s$ be the image in $\PGL_3(q)$ of an element of order three
in $\GL_3(q)$ with three distinct eigenvalues. Then $s$ is rational and has
centraliser a maximal torus, of order $q^2-1$ when $3 \mid (q+1)$, respectively
$(q-1)^2.3$ when $3 \mid (q-1)$. The corresponding semisimple character of degree
$q^3-1$, respectively $\Phi_2\Phi_3/3$, of $\SL_3(q)$ is then rational and
trivial on the centre. This deals with the cases $d=3$. The case of
$\PSU_3(q)$ with $d=6$ can be handled entirely analogously.
\end{proof}

\section{Almost-simple groups}

In this section, we will prove Theorem~\ref{result:main} for almost-simple groups, i.e., for groups $G$ such that $S \leq G \leq \op{Aut}(S)$, where the non-abelian simple group $S$ is identified with $\op{Inn}(S)$. In particular, we will prove that, if $G$ is an almost-simple group with socle $S$, $p$ is a prime number such that $p \mid \abs{G/S}$, and $\sigma \in \gal(\Q_{\abs{G}}/\Q)$ has order $p$, then $G$ has a $\sigma$-invariant irreducible character of degree divisible by $p$ and such that $S$ is not contained in its kernel. This last condition is actually more then what we need, and it requires some further work to be obtained, since we cannot simply apply Theorem~A and Theorem~C of \cite{Grittini:Degrees_of_irreducible_fixed_p-solvable}. On the other hand, since there are actually some imprecisions in the proof of Theorem~A and Theorem~C of \cite{Grittini:Degrees_of_irreducible_fixed_p-solvable}, we can see this as an opportunity to correct them.


First, we need to mention a result from \cite{Grittini:Degrees_of_irreducible_fixed_p-solvable}, which we will use repeatedly in the current paper.

\begin{lemma}[{\cite[Lemma~2.4]{Grittini:Degrees_of_irreducible_fixed_p-solvable}}]
\label{lemma:normal_coprime}
Let $G$ be a finite group, let $N \lhd G$ and let $\phi \in \irr(N)$ such that $(\abs{I_G(\phi)/N},o(\phi)\phi(1))=1$. Let $\sigma \in \gal(\mathbb{Q}_{\abs{G}} / \mathbb{Q})$ and suppose there exists $g \in G$ such that $\phi^{\sigma} = \phi^g$. Then, there exists $\chi \in \irr(G \smid \phi)$ such that $\chi^{\sigma} = \chi$. Moreover, $\chi$ can be chosen such that $\chi(1) = \phi(1) \abs{G:I_G(\phi)}$.
\end{lemma}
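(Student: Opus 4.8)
The plan is to combine the Clifford correspondence with the theory of \emph{canonical extensions}. Write $T:=I_G(\phi)$ for the inertia group of $\phi$ in $G$. Because $\phi$ is $T$-invariant and $(\abs{T/N},o(\phi)\phi(1))=1$, the character $\phi$ has a canonical extension $\hat\phi\in\irr(T)$ — the unique character of $T$ with $\hat\phi_N=\phi$ and $(o(\hat\phi),\abs{T/N})=1$; its existence and uniqueness under this coprimality hypothesis are classical. I would then prove both assertions of the lemma at once by taking $\chi:=\hat\phi^G$.

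The first step is to observe that the inertia group is stable under the Galois action: for any $h\in G$ one has $(\phi^\sigma)^h=(\phi^h)^\sigma$, so $I_G(\phi^\sigma)=I_G(\phi)=T$. Combining this with the identity $I_G(\phi^g)=T^g$ and the hypothesis $\phi^\sigma=\phi^g$ gives $T^g=T$, that is, $g\in\no{G}{T}$. Hence conjugation by $g$ is an automorphism of $T$ fixing $N$, and $\hat\phi^g$ is again a character of $T$, restricting to $\phi^g$ on $N$. Now $\hat\phi^\sigma$ and $\hat\phi^g$ are both characters of $T$ extending $\phi^\sigma=\phi^g$, and since a Galois automorphism and a group conjugation each preserve determinantal orders, $(o(\hat\phi^\sigma),\abs{T/N})=(o(\hat\phi^g),\abs{T/N})=1$. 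As $\phi^g$ is $T$-invariant and inherits the coprimality hypothesis from $\phi$ (conjugation preserves degree and determinantal order), the uniqueness of the canonical extension of $\phi^g$ in $T$ forces
$$\hat\phi^\sigma=\hat\phi^g.$$

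It remains to verify the two claims for $\chi:=\hat\phi^G$. Since $\hat\phi\in\irr(T)$ lies over $\phi$ and $T=I_G(\phi)$, the Clifford correspondence gives $\chi\in\irr(G\smid\phi)$ with $\chi(1)=\abs{G:T}\,\hat\phi(1)=\phi(1)\abs{G:I_G(\phi)}$, which is the value in the ``moreover'' clause. For $\sigma$-invariance, induction commutes with the Galois action, and the induction of a conjugate equals the conjugate of the induction — which, for conjugation by an element of $G$, is just the induced character again; using $g\in\no{G}{T}$ we obtain
$$\chi^\sigma=(\hat\phi^\sigma)^G=(\hat\phi^g)^G=(\hat\phi^G)^g=\chi.$$

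The argument is essentially formal once the canonical extension is in hand, so I do not expect a genuine obstacle. The one input that is more than bookkeeping is the existence part of the canonical-extension theorem: it is what lets us extend $\phi$ all the way up to $T$, which is precisely what makes $\chi$ attain the exact degree $\phi(1)\abs{G:I_G(\phi)}$ (Gallagher's theorem alone would not suffice). The only other point needing care is that the data required to speak of a canonical extension — invariance together with coprimality — really does transfer to $\phi^g=\phi^\sigma$; but this is immediate since conjugation preserves degrees and determinantal orders and $I_G(\phi^g)=T^g=T$.
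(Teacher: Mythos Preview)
Your argument is correct. The paper does not supply its own proof of this lemma---it is quoted verbatim from \cite[Lemma~2.4]{Grittini:Degrees_of_irreducible_fixed_p-solvable}---so there is nothing to compare against directly; that said, the canonical-extension approach you use is the standard one and is almost certainly what the cited reference does as well.
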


We will now prove the following, rather technical Proposition~\ref{proposition:semisimple_characters}, which is essentially a slight variant of \cite[Theorem~3.3]{Grittini:Degrees_of_irreducible_fixed_p-solvable}. In doing so, we correct the aforementioned imprecisions appearing in \cite{Grittini:Degrees_of_irreducible_fixed_p-solvable}. Clearly, this would require us to repeat large sections of the proof of \cite[Theorem~3.3]{Grittini:Degrees_of_irreducible_fixed_p-solvable}. We will also rely often on the results of the Deligne-Lusztig theory. A complete description of the theory can be found in \cite{Geck-Malle:Character_Theory}.

First, however, we need to prove the following lemma.


\begin{lemma}
\label{lemma:torus_subgroup}
Let $S = \mathbf{S}^F$ be a finite group of Lie type defined over a field of $q_0$ elements, for some Steinberg map $F$, such that $S$ is perfect and $S/\ze{S}$ is simple. Take $q$ such that $q^2 = q_0$ if $S/\ze{S}$ is of twisted type, and take $q = q_0$ if $S/\ze{S}$ is untwisted. Then, there exists an $F$-stable maximal torus $\mathbf{T} \leq \mathbf{S}$ such that $T = \mathbf{T}^F$ contains $\ze{S}$ and it also has a nontrivial cyclic subgroup $T_0$ of order either $q - 1$ or $q^2 - 1$, and such that $\abs{T_0 \cap \ze{S}}$ is a power of $2$.
\end{lemma}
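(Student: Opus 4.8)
The plan is to go through the possible types of $\mathbf{S}$ (simply connected simple algebraic group, so that $S = \mathbf S^F$ is quasi-simple once we exclude the small non-perfect cases) and, in each case, exhibit an $F$-stable maximal torus $\mathbf T$ whose fixed points $T = \mathbf T^F$ contain a "large" cyclic subgroup of order $q-1$ or $q^2-1$. The natural candidate is a maximally split torus: if $\mathbf T_0$ is a maximally split $F$-stable maximal torus of $\mathbf S$, then $\mathbf T_0^F$ contains a subtorus isomorphic to the $F$-fixed points of a one-dimensional split (or, in the twisted case, suitably twisted) torus, which is cyclic of order $q-1$ (resp.\ $q^2-1$ in types $\tw2A$, $\tw2D$, $\tw2E_6$ where a rank-one factor can be twisted). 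For $\tw2B_2$, $\tw2G_2$, $\tw3D_4$ one has to be slightly more careful, but in each of these a maximal torus containing a cyclic group of order $q^2-1$ (or $q-1$) is available from the known torus structure; for $\tw3D_4$ over $\mathbb F_{q^3}$ with the relevant $q$ there is a torus of order $(q^2-1)(q^4-q^2+1)$ and another of order $(q-1)(q^3-1)$, so $q-1$ occurs; for the Suzuki and Ree groups the torus orders are listed in \cite{Carter:Finite_groups_of_Lie_type} and contain the required cyclic pieces. So the first step is: fix once and for all the relevant torus type per Lie type, and record that $T$ contains a cyclic $T_0$ of order $q-1$ or $q^2-1$.

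The second step is to ensure simultaneously that (a) $\ze S \le T$ and (b) $|T_0 \cap \ze S|$ is a power of $2$. For (a), recall that in the simply connected group the centre $\ze{\mathbf S}$ is contained in every maximal torus of $\mathbf S$, hence $\ze S = \ze{\mathbf S}^F \le \mathbf T^F = T$ for any $F$-stable maximal torus; so (a) is automatic and in fact forces us to work inside a single simply connected $\mathbf S$. For (b) the point is that $\ze S$ is cyclic except for type $D_n$ with $n$ even (where it is $C_2 \times C_2$) and type $\tw2A_{n}$-ish subtleties; its order is: $\gcd(n,q-1)$ for $\SL_n$, $\gcd(n,q+1)$ for $\SU_n$, $\gcd(4,q^n-1)$ or $C_2^2$ for spin groups, $2$ or $1$ for $\Sp$, $B$, and small or trivial for exceptional types. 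So the odd part of $|\ze S|$ is $p'$-ish but, more to the point, $T_0$ is a \emph{cyclic} group of order $q-1$ or $q^2-1$, and $T_0 \cap \ze S$ is a subgroup of a cyclic group of order dividing a known gcd; by choosing $\mathbf T$ appropriately one arranges that $T_0$ meets only the $2$-part of $\ze S$. Concretely, for $\SL_n(q)$ take for $T_0$ the image of the norm-one-torus/split-torus factor realizing a cyclic group of order $q-1$ sitting diagonally so that $T_0 \cap \ze S$ is the $2$-part of $\mu_{\gcd(n,q-1)}$; for types with $\ze S$ a $2$-group this is vacuous; the only case needing a genuine argument is $D_n$, $n$ even, where one selects the torus so that the order-$(q-1)$ cyclic subgroup hits at most the $2$-part of the Klein-four centre.

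The main obstacle will be the bookkeeping in the classical cases — making the \emph{same} torus do both jobs at once (carry a full cyclic $q-1$ and intersect $\ze S$ in a $2$-group), rather than two different tori — and in particular the even-dimensional spin groups $\Spin_{2n}^{\pm}(q)$, where $\ze S$ can be $C_2 \times C_2$ or $C_4$ and one must check that a chosen order-$(q-1)$ cyclic torus subgroup does not swallow an odd part of the center; this is where I expect to spend most of the care, probably arguing via an explicit description of a maximal torus of $\Spin_{2n}^\pm$ as (a quotient/extension of) $\prod (q^{d_i}\mp 1)$ and locating a split $\GL_1$-factor. The twisted exceptional groups $\tw2B_2, \tw2G_2, \tw2F_4, \tw3D_4, \tw2E_6$ are handled case-by-case from the explicit torus lists in \cite{Carter:Finite_groups_of_Lie_type}, and since their centres are trivial or of order a small power of $2$ (or $3$, as in $\tw2E_6$ with $3\mid q+1$, which does not affect the $2$-power condition as long as $T_0 \cap \ze S$ is arranged trivial), condition (b) is easy there. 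With the type-by-type verification assembled, the lemma follows.
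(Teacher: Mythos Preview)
Your overall strategy---go through the Lie types, take the maximally split torus as the default candidate, and note that $\ze{S}\le T$ is automatic in the simply connected group---is exactly the paper's approach. But you have inverted where the difficulty lies. The condition that $|T_0\cap\ze{S}|$ be a $2$-power is \emph{automatic} whenever $|\ze{S}|$ is itself a $2$-power, and this already disposes of types $B_n$, $C_n$, $D_n$, $G_2$, $F_4$, $E_7$, $E_8$, $\tw2B_2$, $\tw2G_2$, $\tw2F_4$, $\tw3D_4$. In particular your ``main obstacle'', $D_n$ with $n$ even, is trivial: the centre there is $C_2\times C_2$ or $C_4$, so any $T_0$ works. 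There is nothing to arrange.

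The only families where $\ze{S}$ can have odd order, and hence the only places requiring a genuine choice of $T_0$, are $\SL_n(q)$, $\SU_n(q)$, $E_6(q)$, and $\tw2E_6(q)$. You mention $\SL_n$ vaguely and $\tw2E_6$ in passing, but you omit $E_6$ entirely and do not single out $\SU_n$. The paper handles $\SL_n$ with the explicit subgroup $\langle\operatorname{diag}(\lambda,1,\dots,1,\lambda^{-1})\rangle$, which meets the scalar matrices trivially; $\SU_n$ by taking $T_0$ of order $q-1$ so that $|T_0\cap\ze{S}|\le\gcd(q-1,q+1)\le 2$ (with a separate explicit element when $q=2$ or $S=\SU_4(3)$); and $E_6$, $\tw2E_6$ by invoking known descriptions of specific maximal tori and locating a cyclic factor disjoint from the order-$3$ centre. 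Once you redirect your case analysis to these four families, the argument is short.
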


\begin{proof}
It follows from the Classification of the finite simple groups that, for any choice of $S$, there exists a maximally split torus $T$ containing $\ze{S}$ and containing also a subgroup $T_0$ of order either $q - 1$ or $q^2 - 1$ (see for instance the classification in \cite{Wilson:Finite_Simple_Groups}). If $\abs{\ze{S}}$ is a power of $2$, then we are done. If $S/\ze{S} \cong \op{PSL}_n(q)$ with $n > 2$ and $q \neq 2$, then we can consider the subgroup of $T$ generated by $\op{diag}(\lambda,1,...,1,\lambda^{\shortminus 1})$, for some $\lambda \in \mathbb{F}_q^{\times}$ of order $q - 1$, which has trivial intersection with $\ze{S}$. If $S/\ze{S} \cong \op{PSL}_n(2)$, then $\abs{\ze{S}} = 1$. If $S/\ze{S} \cong \op{PSL}_2(9)$, then $q - 1 = 8$.

If $S/\ze{S} \cong \op{PSU}_n(q)$, then usually $T_0$ can be chosen of order $q - 1$ and, since $\abs{\ze{S}} \mid q + 1$, it follows that $\abs{T_0 \cap \ze{S}} \leq 2$. In the remaining cases, namely if $q = 2$ or $S/\ze{S} \cong \op{PSU}_4(3)$, then $n \geq 4$ and we can consider the subgroup of $T$ generated by $\op{diag}(\lambda,\lambda^{\shortminus 1},1,...,1,\overline{\lambda},\overline{\lambda^{\shortminus 1}})$, for some $\lambda \in \mathbb{F}_{q^2}^{\times}$ of order $q^2 - 1$, which has trivial intersection with $\ze{S}$ (notice that $\op{SU}_4(2)$ has trivial centre).

If $S/\ze{S} \cong \Omega_7(3)$ or $G_2(3)$, then $T$ has a subgroup of order $q - 1 = 2$. If $S/\ze{S} \cong E_6(q)$, then we can see from the description of $T$ given in \cite[Section~4.10.3]{Wilson:Finite_Simple_Groups} that it has a subgroup of order $q - 1$ with trivial intersection with the centre. Finally, if $S/\ze{S} \cong {}^2E_6(q)$, then we can see from \cite[Proposition~2.1]{Buturlakin:Spectra_finite_simple_groups_E6_2E6} that there exists a torus $T \cong (\ZZ_{q+1})^2 \times (\ZZ_{q^2-1})^2$, containing $\ze{S}$, and it has a cyclic subgroup $T_0$ of order $q_0 - 1$, while $\abs{\ze{S}} = (3,q_0 + 1)$. Hence, $T_0 \cap \ze{S} = 1$.
\end{proof}

We recall that, if $S = \mathbf{S}^F$ is a finite group of Lie type, it is always possible to find a finite group $S \leq \tilde{S}$ such that $\tilde{S} = \mathbf{\tilde{S}}^{\tilde{F}}$, for some $\mathbf{S} \leq \mathbf{\tilde{S}}$ with connected centre and some Steinberg map $\tilde{F}$ such that $\tilde{F} = F$ on $\mathbf{S}$ (see \cite[Lemma~1.7.3]{Geck-Malle:Character_Theory}). With a little abuse of terminology, we may say that $\tilde{S}$ is a \emph{regular embedding} for $S$. Moreover, if $S \neq {}^2F_4(2)'$ is a simple group of Lie type, then it is always possible to find a finite group of Lie type $H = \mathbf{H}^F$ such that $S \cong H/\ze{H}$. If $\tilde{H}$ is a regular embedding for $H$, then we can abuse again the terminology and say that $\tilde{S}=\tilde{H}/\ze{H}$ is again a \emph{regular embedding} for $S$.

We may also notice that, if $\nu$ is a field automorphism for a group of Lie type $S$, and $\tilde{S}$ is a \emph{regular embedding} for $S$, then $\nu$ acts also on $\tilde{S}$.

\begin{proposition}
\label{proposition:semisimple_characters}
Let $S = \mathbf{S}^F$ be a finite group of Lie type, for some Steinberg map $F$, such that $S$ is perfect and $S/\ze{S} \ncong {}^2E_6(q)$ is simple, and let $S \leq \tilde{S}$ be a regular embedding for $S$. Let $\nu \in \op{Aut}(S)$ be a field automorphism for $S$ of order $p$, for some odd prime number $p$, and let $\sigma \in \gal(\mathbb{Q}_{p\abs{\tilde{S}}} / \mathbb{Q})$ of order $p$. Then, there exists a semisimple character $\chi \in \irr(\tilde{S})$ such that $\ze{S} \leq \ker(\chi)$, $[\chi_S,{\chi_S}^{\nu}]=0$, and either $\chi^{\sigma}=\chi$ or $\chi^{\sigma}=\chi^{\nu^k}$ for some positive integer $k$ coprime with $p$.
\end{proposition}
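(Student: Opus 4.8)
The plan is to produce $\chi$ as a semisimple character $\chi_s\in\irr(\tilde S)$ attached, via Lusztig's parametrization, to a carefully chosen semisimple element $s$ of the dual group $\tilde S^{*}$; here one uses that $\tilde{\mathbf{S}}$ has connected centre, so that the semisimple characters of $\tilde S$ are in bijection with the semisimple conjugacy classes of $\tilde S^{*}$ through $s\mapsto\chi_s$, with $\chi_s(1)=\abs{\tilde S^{*}:\ce{\tilde S^{*}}{s}}_{r'}$, $r$ the defining prime. I then invoke the standard equivariance properties of this parametrization, all available in \cite{Geck-Malle:Character_Theory} (those concerning the Galois action being consequences of the behaviour of Deligne--Lusztig characters under Galois automorphisms): (i) a field automorphism $\nu$ of $\tilde S$ sends $\chi_s$ to $\chi_{\nu(s)}$, where by a harmless abuse $\nu$ also denotes the induced field automorphism of $\tilde S^{*}$; (ii) a Galois automorphism $\sigma$ sends $\chi_s$ to $\chi_{s'}$, where $s'$ is obtained from $s$ by applying $\sigma$ to its eigenvalues, so that $\chi_s^{\sigma}=\chi_{s^{a}}$ whenever the restriction of $\sigma$ to $\QQ_{o(s)}$ raises $o(s)$-th roots of unity to the $a$-th power; (iii) one has $\ze{S}\leq\ker\chi_s$ exactly when $s$ lies in the subgroup of $\tilde S^{*}$ dual to $\tilde S/\ze{S}$; and (iv) two semisimple characters of $\tilde S$ have a common constituent upon restriction to $S$ precisely when their parameters become $\tilde S^{*}$-conjugate after multiplication by an element of the subgroup $Z\leq\ze{\tilde S^{*}}$ dual to $\tilde S/S$, equivalently when their images in $S^{*}:=\tilde S^{*}/Z$ are $S^{*}$-conjugate. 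The argument runs parallel to the proof of \cite[Theorem~3.3]{Grittini:Degrees_of_irreducible_fixed_p-solvable}, which it also refines.

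To construct $s$, write $q=q_1^{p}$ for the data of the field automorphism, so that $\nu$ is induced by $x\mapsto x^{q_1}$, and apply Lemma~\ref{lemma:torus_subgroup} to obtain an $F$-stable maximal torus $\mathbf{T}\leq\mathbf{S}$ — which one may take maximally split, so that $\nu$ acts on it as the $q_1$-power map — with $T=\mathbf{T}^{F}\supseteq\ze{S}$ and a cyclic subgroup $T_0\leq T$ of order $m\in\{q-1,q^{2}-1\}$ such that $\abs{T_0\cap\ze{S}}$ is a power of $2$. Put $\tilde{\mathbf{T}}=\mathbf{T}\cdot\ze{\tilde{\mathbf{S}}}$, an $F$-stable maximal torus of $\tilde{\mathbf{S}}$, and pass to its dual $\tilde{\mathbf{T}}^{*}\leq\tilde{\mathbf{S}}^{*}$, whose group of $F^{*}$-fixed points has the same order as $\tilde{\mathbf{T}}^{F}$. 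Under the hypotheses of the Proposition the lemma in fact always delivers $T_0$ of order $m=q-1=q_1^{p}-1$: the configurations where the order $q^{2}-1$ is needed, or where the lemma's exceptional cases apply, are excluded either by the hypothesis $S/\ze{S}\ncong{}^{2}E_6(q)$ or because they force $q\in\{2,3,9\}$, which is incompatible with $q=q_1^{p}$ for an odd prime $p$. Hence Zsigmondy's theorem (with no exception, $p$ being an odd prime) supplies a primitive prime divisor $\ell$ of $m$; then $q_1$ has multiplicative order exactly $p$ modulo $\ell$, whence $\ell$ is odd, $\ell\neq p$, $\ell\equiv1\pmod p$, and $\ell\nmid q_1-1$. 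I now choose a semisimple element $s\in(\tilde{\mathbf{T}}^{*})^{F^{*}}$ whose order $o(s)$ equals $\ell$ — or, in the few small configurations in which $\ell$ is forced to be too small, a suitable multiple of $\ell$ chosen so that $q_1$ still has order $p$ modulo $o(s)$ and $(\ZZ/o(s))^{\times}$ still has a unique subgroup of order $p$ — subject to two requirements: first, $s$ lies in the subgroup of $\tilde S^{*}$ dual to $\tilde S/\ze{S}$, which can be arranged precisely because $o(s)$ is odd while $\abs{T_0\cap\ze{S}}$ is a $2$-power; second, the image $\bar{s}$ of $s$ in $S^{*}$ is not $S^{*}$-conjugate to $\bar{s}^{b}$, where $b=q_1\bmod o(s)$ records the action of $\nu$ on $\groupgen{s}$ and so has order exactly $p$ modulo $o(s)$. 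Existence of such an $s$ is a counting argument inside the $\ell$-part of $(\tilde{\mathbf{T}}^{*})^{F^{*}}$: the classes of such elements whose image in $S^{*}$ is $b$-rational are confined to a proper sublattice, and $b$, having odd prime order $p$ modulo $o(s)$, cannot satisfy $b^{2}\equiv1$, which rules out the only coincidence that would otherwise be forced for every $s$.

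It remains to verify the three conclusions for $\chi:=\chi_s$. Triviality of $\chi$ on $\ze{S}$ is the first requirement on $s$ together with (iii). For $[\chi_S,\chi_S^{\nu}]=0$: by (i), $\chi_S^{\nu}$ is the $S$-restriction of $\chi_s^{\nu}=\chi_{\nu(s)}=\chi_{s^{b}}$ (the last equality since $\nu$ acts by $q_1$-power on the maximally split torus), and by (iv) the restrictions $\chi_S$ and $\chi_S^{\nu}$ share a constituent exactly when $\bar{s}^{b}$ is $S^{*}$-conjugate to $\bar{s}$, which the second requirement on $s$ excludes. For the Galois condition: since $\sigma$ has order $p$, its restriction to $\QQ_{o(s)}$ raises $o(s)$-th roots of unity to the $a$-th power for some $a$ whose order in $(\ZZ/o(s))^{\times}$ divides $p$; as $(\ZZ/o(s))^{\times}$ has a unique subgroup of order $p$ and it contains $b$, either $a\equiv1\pmod{o(s)}$, so that $\chi^{\sigma}=\chi_{s^{a}}=\chi_s=\chi$ by (ii), or $a\equiv b^{k}\pmod{o(s)}$ for some $k$ with $1\leq k\leq p-1$, so that $s^{a}=s^{b^{k}}=\nu^{k}(s)$ and hence $\chi^{\sigma}=\chi_{s^{a}}=\chi_{\nu^{k}(s)}=\chi^{\nu^{k}}$ with $k$ coprime to $p$, as required.

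The hard part is the second step: exhibiting a semisimple element $s$ realising simultaneously the correct central behaviour and the failure of $b$-rationality of its image in $S^{*}$. For the vast majority of groups a sufficiently generic element of prime order $\ell$ does the job, but in a handful of small cases — of exactly the type isolated for separate treatment in Proposition~\ref{proposition:some_cases}, and, for $\PSL_3(q)$, in Proposition~\ref{proposition:PSL3} — the element of prime order is too rigid, its class being forced to be $b$-rational, and one must instead display an explicit element of composite order with prescribed eigenvalues. Keeping track of these exceptional configurations, and correcting en route the imprecisions in the proof of \cite[Theorem~3.3]{Grittini:Degrees_of_irreducible_fixed_p-solvable}, is where essentially all of the technical effort is concentrated.
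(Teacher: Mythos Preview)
Your approach is essentially the paper's, recast in the dual-group language: where the paper works with a linear character $\theta\in\irr(\tilde T)$ and the unique semisimple character of $\tilde S$ lying over it, you work with the corresponding semisimple element $s\in\tilde S^{*}$ and the Lusztig parametrization $s\mapsto\chi_s$. These are two descriptions of the same object, and your equivariance properties (i)--(iv) correspond precisely to the facts the paper uses (uniqueness of the semisimple constituent when the centre is connected; \cite[Lemma~3.2]{Grittini:Degrees_of_irreducible_fixed_p-solvable} for compatibility with $\rho$; \cite[Theorem~2.6.2]{Geck-Malle:Character_Theory} for orthogonality). Where you give an explicit Zsigmondy-prime construction, the paper simply quotes \cite[Lemma~3.1]{Grittini:Degrees_of_irreducible_fixed_p-solvable} and its proof to produce $\theta$ with $o(\theta_{T_0})$ odd, $\theta_T^{\nu}\neq\theta_T$, $\ze{S}\leq\ker\theta$, and $\theta$ fixed by either $\sigma$ or $\nu^{k}\sigma^{-1}$; your Zsigmondy argument is presumably what that lemma does internally.

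The one substantive divergence is your treatment of the orthogonality condition. You regard showing that $\bar s$ and $\bar s^{\,b}$ are not $S^{*}$-conjugate as the ``hard part'', to be handled by a counting argument with ad hoc exceptions. The paper instead disposes of the dual statement --- that $\theta_T$ and $\theta_T^{\nu}$ are not $S$-conjugate --- in one line via a Frattini-type argument: if they were $S$-conjugate, then (since the $S$-orbit of $\theta_T$ would be $\nu$-stable) one would get $\nu\in I_G(\theta_T)S$ and hence $\nu\in I_G(\theta_T)$, contradicting $\theta_T^{\nu}\neq\theta_T$. You should examine whether this shortcut transfers to your setting before committing to a case analysis; if it does, your ``handful of small cases'' disappears entirely. (Your references to Propositions~\ref{proposition:some_cases} and~\ref{proposition:PSL3} here are in any case misplaced: those results construct rational characters of degree divisible by~$p$ and play no role in the present proposition.) A minor point: your assertion that Lemma~\ref{lemma:torus_subgroup} always yields $|T_0|=q-1$ under the hypotheses is not quite right for the very twisted types (Suzuki, Ree), where the parameter conventions differ; the paper's formulation, recording only that $\nu$ acts on $T$ as the $\ell^{f/p}$-power map, sidesteps this.
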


\begin{proof}
Suppose that $S$ is defined over a field of $q_0=\ell^f$ elements, then by Lemma~\ref{lemma:torus_subgroup} here exists an $F$-stable maximal torus $\mathbf{T} \leq \mathbf{S}$, such that $T = \mathbf{T}^F$ contains $\ze{S}$, and a subgroup $T_0 \leq T$ of order $q - 1$, with either $q_0 = q$ or $q_0 = q^2$, such that $\abs{T_0 \cap \ze{S}}$ is a power of $2$. By \cite[Lemma~1.7.7]{Geck-Malle:Character_Theory}, there exists an $\tilde{F}$-stable maximal torus $\mathbf{T} \leq \mathbf{\tilde{T}}$ of $\mathbf{\tilde{S}}$ such that, if $\tilde{T} = \mathbf{\tilde{T}}^{\tilde{F}}$, then $\tilde{S} = \tilde{T}S$ and $T = \tilde{T} \cap S$.

Notice that $\nu(x) = x^{\ell^{\nicefrac{f}{p}}}$ for all $x \in T$. By \cite[Lemma~3.1]{Grittini:Degrees_of_irreducible_fixed_p-solvable} and its proof, there exists $\theta \in \irr(\tilde{T})$ such that $o(\theta_{T_0})$ is odd, ${\theta_T}^{\nu} \neq \theta_T$ and $\theta$ is fixed either by $\sigma$ or by $\nu^k \times \sigma^{\shortminus 1}$, for some positive integer $k$ coprime with $p$. Moreover, since $T_0 \cap \ze{S} \leq \ker(\theta)$, we can see from the proof of \cite[Lemma~3.1]{Grittini:Degrees_of_irreducible_fixed_p-solvable} that we can actually take $\theta$ such that it contains the whole centre $\ze{S}$ in its kernel.

Let $\rho \in P \times \groupgen{\sigma}$ such that $\theta^{\rho} = \theta$ and either $\rho = 1 \times \sigma$ or $\rho = \nu^k \times \sigma^{\shortminus 1}$, and let $\chi \in \irr(\tilde{S})$ be a semisimple character lying over $\theta$. By \cite[Lemma~3.2]{Grittini:Degrees_of_irreducible_fixed_p-solvable}, $\chi^{\rho}$ is a semisimple character lying over $\theta^{\rho}=\theta$ and, since $\mathbf{\tilde{S}}$ has connected centre, by \cite[paragraph 2.6.10]{Geck-Malle:Character_Theory} we have that $\chi^{\rho} = \chi$. Finally, notice that $\theta_T, {\theta_T}^{\nu} \in \irr(T)$ are not conjugated in $S$, since otherwise the Frattini argument would imply that $\nu \in I_G(\theta_T) S$ and, thus, $\nu \in I_G(\theta_T)$. Since the irreducible constituents of $\chi_S$ and ${\chi^{\nu}}_S$ are semisimple characters lying over $\theta_T$ and ${\theta_T}^{\nu}$, it follows from \cite[Theorem~2.6.2]{Geck-Malle:Character_Theory} that $[{\chi^{\nu}}_S,\chi_S] = 0$.
\end{proof}

\begin{corollary}
\label{corollary:semisimple_characters}
Let $S \ncong {}^2E_6(q),{}^2F_4(2)'$ be a finite simple group of Lie type, and let $S \leq \tilde{S}$ be a regular embedding for $S$. Let $\nu \in \op{Aut}(S)$ be a field automorphism for $S$ of order $p$, for some odd prime number $p$, and let $\sigma \in \gal(\mathbb{Q}_{p\abs{\tilde{S}}} / \mathbb{Q})$ of order $p$. Then, there exists a semisimple character $\chi \in \irr(\tilde{S})$ such that $[\chi_S,{\chi_S}^{\nu}]=0$ and either $\chi^{\sigma}=\chi$ or $\chi^{\sigma}=\chi^{\nu^k}$ for some positive integer $k$ coprime with $p$.
\end{corollary}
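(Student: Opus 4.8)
The plan is to reduce to Proposition~\ref{proposition:semisimple_characters} by passing through a simply connected cover of $S$. Since $S\ncong{}^2F_4(2)'$, the remark preceding Proposition~\ref{proposition:semisimple_characters} gives a finite group of Lie type $H=\mathbf{H}^F$, with $\mathbf{H}$ of simply connected type, such that $S\cong H/\ze{H}$; and $H$ is perfect, since otherwise $H/\ze{H}$ would fail to be a non-abelian simple group (inspect the short list of non-perfect $\mathbf{H}^F$ with $\mathbf{H}$ simple of simply connected type). The hypothesis $S\ncong{}^2E_6(q)$ becomes $H/\ze{H}\ncong{}^2E_6(q)$, so $H$ meets all the assumptions of Proposition~\ref{proposition:semisimple_characters}. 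I fix a regular embedding $\tilde{H}$ of $H$, so that $\tilde{S}=\tilde{H}/\ze{H}$ is the regular embedding of $S$ in use. Because $\ze{H}=\ze{\mathbf{H}}^F$ is preserved by every field automorphism of $H$, the field automorphism $\nu$ of $S$ lifts to a field automorphism $\hat{\nu}\in\op{Aut}(H)$ of order $p$ inducing $\nu$ on $H/\ze{H}$ (up to $S$-conjugacy, which does not affect any of the conclusions below), and $\hat{\nu}$ acts on $\tilde{H}$ as well.

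The main point requiring care is to transport $\sigma$ to $\tilde{H}$. Any $\chi\in\irr(\tilde{S})$, regarded as a character of $\tilde{H}$ trivial on $\ze{H}$, has $\mathbb{Q}(\chi)\subseteq\mathbb{Q}_{\abs{\tilde{S}}}$, so the $\sigma$-action on $\irr(\tilde{S})$ depends only on $\sigma_0:=\sigma|_{\mathbb{Q}_{\abs{\tilde{S}}}}$. I claim there is $\hat{\sigma}\in\gal(\mathbb{Q}_{p\abs{\tilde{H}}}/\mathbb{Q})$ of order $p$ with $\hat{\sigma}|_{\mathbb{Q}_{\abs{\tilde{S}}}}=\sigma_0$. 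The crucial observation is that $\sigma$, having order $p$ in $\gal(\mathbb{Q}_{p\abs{\tilde{S}}}/\mathbb{Q})$ --- whose Sylow $p$-subgroup carries, in its factor coming from the $p$-part of $p\abs{\tilde{S}}$, one more power of $p$ than the corresponding factor for $\gal(\mathbb{Q}_{\abs{\tilde{S}}}/\mathbb{Q})$ --- must restrict trivially to $\mathbb{Q}_{\abs{\tilde{S}}_p}$; hence $\sigma_0$ is trivial there. Decomposing $\gal(\mathbb{Q}_{p\abs{\tilde{H}}}/\mathbb{Q})$ and $\gal(\mathbb{Q}_{\abs{\tilde{S}}}/\mathbb{Q})$ into products of cyclic groups via the Chinese Remainder Theorem, the only factor that could obstruct an order-$p$ lift of $\sigma_0$ is the one attached to the $p$-part of $\abs{\tilde{H}}$, which may indeed be divisible by $p$, namely when $p\mid\abs{\ze{H}}$ (as for $S=\PSL_n(q)$ with $p\mid\gcd(n,q-1)$); but there $\sigma_0$ is trivial, so the desired $\hat{\sigma}$ exists. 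This elementary number theory is where I expect the bookkeeping to be least pleasant.

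Now apply Proposition~\ref{proposition:semisimple_characters} to $H$ with the automorphisms $\hat{\nu}$ and $\hat{\sigma}$: it yields a semisimple character $\chi\in\irr(\tilde{H})$ with $\ze{H}\leq\ker(\chi)$, $[\chi_H,{\chi_H}^{\hat{\nu}}]=0$, and either $\chi^{\hat{\sigma}}=\chi$ or $\chi^{\hat{\sigma}}=\chi^{\hat{\nu}^k}$ for some positive integer $k$ coprime to $p$. Since $\ze{H}\leq\ker(\chi)$, the character $\chi$ deflates to $\bar{\chi}\in\irr(\tilde{S})$, and $\bar{\chi}$ is again semisimple by the compatibility of Lusztig series with the central quotient $\tilde{H}\to\tilde{H}/\ze{H}=\tilde{S}$ (as used in the proof of Proposition~\ref{proposition:semisimple_characters}; cf.\ \cite[Lemma~3.2]{Grittini:Degrees_of_irreducible_fixed_p-solvable} and \cite{Geck-Malle:Character_Theory}). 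Inflation from $H/\ze{H}$ to $H$ is an isometry, and $\bar{\chi}_S$ inflates to $\chi_H$ while ${\bar{\chi}_S}^{\nu}$ inflates to ${\chi_H}^{\hat{\nu}}$, so $[\bar{\chi}_S,{\bar{\chi}_S}^{\nu}]=[\chi_H,{\chi_H}^{\hat{\nu}}]=0$. Finally, $\hat{\sigma}$ and $\sigma$ agree on $\mathbb{Q}(\bar{\chi})\subseteq\mathbb{Q}_{\abs{\tilde{S}}}$ and $\hat{\nu}$ induces $\nu$ on $\tilde{S}$, so the relation $\chi^{\hat{\sigma}}\in\{\chi,\chi^{\hat{\nu}^k}\}$ descends to $\bar{\chi}^{\sigma}\in\{\bar{\chi},\bar{\chi}^{\nu^k}\}$, which is exactly what is asserted.
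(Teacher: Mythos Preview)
Your proof is correct and follows the same route as the paper's: pass to the simply connected cover $H$ of $S$ and invoke Proposition~\ref{proposition:semisimple_characters} to obtain a semisimple character with $\ze{H}$ in its kernel, then deflate to $\tilde{S}$. The paper's two-sentence argument suppresses exactly the point you work hardest on---extending $\sigma$ from $\gal(\mathbb{Q}_{p\abs{\tilde{S}}}/\mathbb{Q})$ to an order-$p$ element $\hat{\sigma}\in\gal(\mathbb{Q}_{p\abs{\tilde{H}}}/\mathbb{Q})$ agreeing with $\sigma$ on $\mathbb{Q}_{\abs{\tilde{S}}}$---and your cyclotomic bookkeeping handles this correctly (note that if $\sigma_0$ happens to be trivial then necessarily $p\mid\abs{\tilde{S}}$, so the $p$-component of $\gal(\mathbb{Q}_{p\abs{\tilde{H}}}/\mathbb{Q}_{\abs{\tilde{S}}})$ still furnishes the required order-$p$ lift).
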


\begin{proof}
Since $S \ncong {}^2F_4(2)'$, there exists a finite group of Lie type $H = \mathbf{H}^F$, for some Steinberg map $F$, such that $H$ is perfect and $S \cong H/\ze{H}$. Then, Proposition~\ref{proposition:semisimple_characters} guarantees the existence of a character $\chi$ with the desired properties which contains $\ze{H}$ in its kernel.
\end{proof}

We now need to address explicitly two of the cases left out by Theorem~\ref{result:simple_rational}.

\begin{proposition}
\label{proposition:PSL3}
Let $S$ be a simple group and assume that either
\begin{enumerate}[\rm(1)]
\item $S=\PSL_3(q)$ with $3 \mid q - 1$, or
\item $S=\PSU_3(q)$ with $3 \mid q + 1$.
\end{enumerate}
Let $G \geq S$ be an almost-simple group with socle $S$ such that $G/S$ is a $3$-group, and let $\sigma \in \gal(\mathbb{Q}_{\abs{G}}/\mathbb{Q})$ be of order $3$. Then, $G$ has a $\sigma$-invariant irreducible character of degree divisible by $3$ which does not contain $S$ in its kernel.
\end{proposition}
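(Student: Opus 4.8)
The plan is to realise the wanted character as a rational (or at least $\sigma$-fixed) extension to $G$ of a distinguished rational irreducible character of $S$ of degree $q^2+q+1$. By Ennola duality one reduces to case~(1): assume $S=\PSL_3(q)$ with $3\mid q-1$, so that $3\mid q^2+q+1=\Phi_3(q)$ (indeed $3\,\|\,\Phi_3(q)$); case~(2) follows by substituting $-q$ for $q$, which interchanges $\Phi_3$ and $\Phi_6$ and replaces $q^2+q+1$ by $q^2-q+1$.

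First I would fix a regular embedding $\SL_3(q)\le\GL_3(q)$ and take the semisimple character $\chi_t$ of $\GL_3(q)$ attached, through Deligne-Lusztig duality, to the class of a semisimple element $t$ of the dual group with centraliser $\GL_2(q)\times\GL_1(q)$; for $q$ odd one takes $t=\op{diag}(1,1,-1)$, an involution. Then $\chi_t(1)=q^2+q+1$, and I would verify: $(i)$ $t$ is not $\GL_3(q)$-conjugate to a nontrivial scalar multiple of itself, so $\chi_t$ has trivial stabiliser under twisting by $\irr(\GL_3(q)/\SL_3(q))$, whence $\chi_t|_{\SL_3(q)}$ is irreducible of degree $q^2+q+1$; $(ii)$ $t$ may be chosen with $\det t$ a cube in $\FF_q^{\times}$ (automatic for an involution), so $\chi_t|_{\SL_3(q)}$ is trivial on $\ze{\SL_3(q)}$ and descends to $\bar\chi\in\irr(S)$ of degree $q^2+q+1$, a multiple of~$3$; $(iii)$ $\bar\chi$ is rational, since $\gal(\Q_{\abs{G}}/\Q)$ acts on semisimple characters of $\GL_3(q)$ by $\chi_t\mapsto\chi_{t^{k}}$ and, for every $k$ prime to the order of $t$, $t^{k}$ is conjugate to a scalar multiple of $t$, leaving the $\SL_3(q)$-restriction, hence $\bar\chi$, Galois-fixed; $(iv)$ $\bar\chi$ is $\op{Aut}(S)$-invariant for $q$ odd (the class of $t$ is stable under diagonal automorphisms, as $\chi_t$ lives on $\GL_3(q)$, and under field automorphisms, as $t$ is defined over the prime field). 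For $q$ even one takes instead $t=\op{diag}(a,a,b)$ with $a/b$ of order~$3$ and $a,b$ in the fixed field of the field automorphisms in $G$ (feasible because $G/S$, a $3$-group, involves no graph automorphism and has cyclic field part); this makes $\det t$ a cube exactly when $9\mid q-1$, leaving the $q=2^{f}$ with $3\,\|\,q-1$ (then the $3$-part of $f$ is $1$, so $G\in\{S,\PGL_3(q)\}$) for the end.

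With $\bar\chi$ available, $G$-invariance gives an extension $\hat\chi\in\irr(G)$ (extendibility of automorphism-invariant characters of quasi-simple groups of Lie type to their stabiliser in the almost-simple overgroup being known), and the characters of $G$ over $\bar\chi$ are the $\hat\chi\lambda$, $\lambda\in\irr(G/S)$, all of degree $q^2+q+1$ and none with $S$ in the kernel; I must pick $\lambda$ with $\hat\chi\lambda$ $\sigma$-invariant. For $q$ odd I would obtain a rational extension directly: write $G/S=(G_1/S)\times(G_2/S)$ with $G_1=G\cap\PGL_3(q)$ the diagonal part and $G_2$ the field part (these commute after passing to $3$-parts); $\bar\chi$ extends rationally to $G_1$ (via the semisimple character of $\PGL_3(q)$ attached to a rational involution of $\SL_3(q)$) and rationally to $G_2$ (the Deligne-Lusztig construction being equivariant for the field Frobenius fixing the class of $t$); normalising $\hat\chi$ to restrict to these two rational extensions on $G_1$ and $G_2$ — compatible since $G_1\cap G_2=S$ and $G_1G_2=G$ — the linear character $\hat\chi^{\tau}\hat\chi^{-1}\in\irr(G/S)$ is trivial on $G_1$ and on $G_2$, hence trivial, so $\hat\chi$ is rational, in particular $\sigma$-invariant, and we are done.

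The main obstacle is the rational extension to the diagonal part when $q$ is even: an element of $\SL_3(q)$ of the required centraliser type is rational only when the ratio of its eigenvalues has order~$3$, which forces $9\mid q-1$. In the remaining cases ($3\,\|\,q-1$, so $G$ is $S$ or $\PGL_3(q)$) I would argue more directly: for $G=\PGL_3(q)$ the semisimple character of degree $\Phi_3(q)$ attached to $\op{diag}(a,a,a^{-2})\in\SL_3(q)$ with $o(a)=r$ a prime $>3$ dividing $q-1$ is $\sigma$-fixed as soon as $\sigma$ is trivial on $\Q_r$, and such $r$ can be chosen unless every prime divisor of $q-1$ on which $\sigma$ acts nontrivially already forces $\PGL_3(q)$ to have a rational character of degree divisible by~$3$ (a direct check, in the spirit of the introduction); for $G=S=\PSL_3(q)$, and for $q=4$, one inspects the character table — either some degree divisible by~$3$ carries a number of irreducible characters not divisible by~$3$, so $\sigma$ (of order~$3$) fixes one of them (for $\PSL_3(4)$ the degrees $45$ and $63$ each carry exactly two), or $S$ has a rational character of degree divisible by~$3$.
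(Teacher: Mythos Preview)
Your strategy differs substantially from the paper's, and while the $q$ odd case can probably be completed along your lines, the argument as written has a real gap, and the $q$ even case is left genuinely incomplete.

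The gap is the passage from $G_2$-invariance of $\bar\chi$ to a $\sigma$-invariant (or rational) extension over the field part $G_2$. Your justification, ``the Deligne--Lusztig construction being equivariant for the field Frobenius fixing the class of $t$'', yields only $\bar\chi^{\nu}=\bar\chi$; it does not single out an extension. The usual device --- the unique extension of the same determinantal order --- is unavailable precisely because $3\mid\bar\chi(1)$ while $G_2/S$ is a $3$-group: all $\abs{G_2/S}$ extensions then share the same determinant, and for $\abs{G_2/S}=3$ (where $\sigma$ fixes every linear character of $G_2/S$) $\sigma$ could a priori $3$-cycle them without a fixed point. For $q$ odd this is reparable by citing \cite[Theorem~6.30]{Isaacs}, exactly as the paper itself does in the proof of Theorem~\ref{theorem:almost_simple}, to get a rational extension of the rational $\bar\chi$; your gluing of $\hat\chi_1$ and $\hat\chi_2$ then goes through. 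For $q$ even with $9\mid q-1$, however, your $\bar\chi$ is \emph{not} rational --- the image $\bar t\in\PGL_3(q)$ of $\op{diag}(a,a,b)$ has order~$3$ and is not $\PGL_3(q)$-conjugate to $\bar t^{-1}$ --- so that repair is unavailable and the extension step remains unjustified. For $3\,\|\,q-1$, your treatment of $G=\PGL_3(q)$ and $G=S$ (``a direct check, in the spirit of the introduction''; ``one inspects the character table'') is not a proof.

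The paper avoids all extension issues by a different case split. When $G$ contains a nontrivial field automorphism $\nu$, it does \emph{not} seek a $\nu$-invariant character: Corollary~\ref{corollary:semisimple_characters} supplies a semisimple $\psi\in\irr(\tilde S)$ with $\psi_S^{\nu}\neq\psi_S$ and $\psi^{\sigma}\in\{\psi,\psi^{\nu^k}\}$; since at most one character of $\tilde S$ fails to restrict irreducibly to $S$, $\psi_S$ is irreducible with inertia exactly $S$, and Lemma~\ref{lemma:normal_coprime} produces a $\sigma$-invariant \emph{induced} character of degree divisible by~$3$. When $G\in\{S,\PGL_3(q)\}$, the paper makes no construction at all: it reads off from the tables of Steinberg and Simpson--Frame the number of irreducibles of degree $(q+1)\Phi_3(q)$ in $\PGL_3(q)$, namely $\tfrac16(q^2-5q+10)$, and of degrees $\Phi_3(q)$ and $(q-1)\Phi_3(q)$ in $S$, namely $\tfrac13(q-4)$ and $\tfrac16 q(q-1)$, and checks by congruences modulo~$9$ that at least one of these counts is coprime to~$3$, forcing $\sigma$ to fix a character of that degree. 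This argument is uniform in~$q$.
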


\begin{proof}
We prove the proposition explicitly only for $S=\PSL_3(q)$ and $3 \mid q - 1$, since the case of $S=\PSU_3(q)$ with $3 \mid q + 1$ can be handled analogously.

We can identify $G$ with a subgroup of $\op{Aut}(S)$ containing $S$, and we can define $S < \tilde{S} < \op{Aut}(S)$ such that $\tilde{S} \cong \op{PGL}_3(q)$. If we consider the character tables of $\op{PGL}_3(q)$ (see \cite[Table~VIII]{Steinberg:The_representations_of}) and of $\PSL_3(q)$ (see \cite[Table~2]{Simpson-Frame:The_character_table_for}), with $3 \mid q-1$, we can see that almost all the irreducible characters of $\tilde{S}$ restrict irreducibly to $S$, with the only exception being a single character of $\tilde{S}$ of degree $(q+1)(q^2+q+1)$. We can also notice that $\tilde{S}$ has exactly $\frac{1}{6}(q^2 - 5q + 10)$ irreducible characters of degree $(q+1)(q^2+q+1)$, while $S$ has exactly $\frac{1}{3}(q - 4)$ irreducible characters of degree $q^2+q+1$ and $\frac{1}{6}q(q - 1)$ irreducible characters of degree $(q-1)(q^2+q+1)$. We can see that, since $q \equiv 1$ (mod $3$), then $3$ divides all the mentioned degrees. We will use all these facts in the following paragraphs.

Suppose first that $G$ contains a non-trivial field automorphism for $S$, let $\tau \in G$ be a field automorphism of maximal order and let $\nu=\tau^{o(\tau)/p}$ be a field automorphism of order $3$. Consider the group $\Gamma = \groupgen{\tau} \ltimes \tilde{S}$, so that $\Gamma = G\tilde{S}$. It follows from Corollary~\ref{corollary:semisimple_characters} that there exists a semisimple character $\psi \in \irr(\tilde{S})$ such that ${\psi^{\nu}}_S \neq \psi_S$ and either $\psi^{\sigma}=\psi$ or $\psi^{\sigma}=\psi^{\nu^k}$ for some positive integer $k$ coprime with $3$.

Since $\nu$ is of minimal order in the cyclic group $\groupgen{\tau}$, we have that $\psi$ has inertia subgroup $I_{\Gamma}(\psi)=\tilde{S}$. If $\tilde{S}<G$, then the thesis follows by Lemma~\ref{lemma:normal_coprime} (notice that, in this case, the conclusion also follows directly from \cite[Theorem~C]{Grittini:Degrees_of_irreducible_fixed_p-solvable}). Suppose now that $G \cap \tilde{S} = S$ and notice that $\psi_S$ is irreducible, since otherwise also $\psi^{\nu} \neq \psi$ would not restrict irreducibly to $S$, and we have seen that all the irreducible characters of $\tilde{S}$ but one restrict irreducibly to $S$. Hence, $\chi = \psi_S \in \irr(S)$, $\chi^{\nu} \neq \chi$ and either $\chi^{\sigma}=\chi$ or $\chi^{\sigma}=\chi^{\nu^k}$. In particular, $I_G(\chi)=S$ and it follows again from Lemma~\ref{lemma:normal_coprime} that $\chi^G$ is a $\sigma$-invariant irreducible character of $G$ of degree divisible by $3$.

We can now assume that $G$ does not contain a non-trivial field automorphism for $S$ and, thus, either $G=\tilde{S}$ or $G=S$. In the former case, $G$ has exactly $\frac{1}{6}(q^2 - 5q + 10)$ irreducible characters of degree $(q+1)(q^2+q+1)$. Since $q \equiv 1$ (mod $3$), we have that $q^2+q+1 \equiv 0$ (mod $3$). On the other hand, $q$ is congruent modulo $9$ to either $1$, $4$ or $7$ and we can see that, in either cases, $q^2 - 5q + 10 \not\equiv 0$ (mod $9$) and, thus, $3 \nmid \frac{1}{6}(q^2 - 5q + 10)$. It follows that $\sigma$ fixes at least one character of degree $(q+1)(q^2+q+1)$ and we are done in this case.

Finally, assume that $G=S$, then it has exactly $\frac{1}{3}(q - 4)$ irreducible characters of degree $q^2+q+1$ and $\frac{1}{6}q(q - 1)$ irreducible characters of degree $(q-1)(q^2+q+1)$. As in the previous paragraph, $3$ divides both degrees. Suppose that $9 \nmid q - 4$, then $3 \nmid \frac{1}{3}(q - 4)$ and $G$ has a $\sigma$-invariant character of degree equals to $q^2+q+1$. Hence, we can assume that $q \equiv 4$ (mod $9$) and it follows that $q(q - 1) \equiv 3$ (mod $9$). In particular, $3 \nmid \frac{1}{6}q(q - 1)$ and $G$ has a $\sigma$-invariant character of degree $(q-1)(q^2+q+1)$. This conclude our proof.
\end{proof}

We now cite a result from \cite{Grittini:Degrees_of_irreducible_fixed_p-solvable}, which we will use often, sometimes without mentioning it explicitly.

\begin{proposition}[{\cite[Proposition~2.3]{Grittini:Degrees_of_irreducible_fixed_p-solvable}}]
\label{proposition:invariant_in_subgroup}
Let $G$ be a finite group, let $p$ be a prime number and let $\sigma \in \gal(\mathbb{Q}_{\abs{G}} / \mathbb{Q})$ such that $o(\sigma)$ is a power of $p$. Suppose $H \leq G$ such that $p \nmid \abs{G:H}$ and suppose, for some $\psi \in \irr(H)$, that $\psi(1)$ divides the degree of every irreducible constituent of $\psi^G$. If $\psi$ is fixed by $\sigma$, then $\sigma$ also fixes some irreducible constituents of $\psi^G$.
\end{proposition}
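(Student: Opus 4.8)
The plan is a short counting argument built around a single integer identity. First I would record the standard compatibility of the Galois action with induction: since every value of $\psi$ lies in $\mathbb{Q}_{\abs{H}}\subseteq\mathbb{Q}_{\abs{G}}$, the automorphism $\sigma$ acts on $\irr(H)$ and on $\irr(G)$, and the induction formula gives $(\psi^\sigma)^G=(\psi^G)^\sigma$. Hence, as $\psi^\sigma=\psi$, the character $\psi^G$ is $\sigma$-invariant. Writing $\psi^G=\sum_{i=1}^r a_i\chi_i$ with $\chi_1,\dots,\chi_r\in\irr(G)$ distinct and $a_i=[\psi^G,\chi_i]>0$, the characters $\chi_i^\sigma$ are again distinct irreducible constituents of $(\psi^G)^\sigma=\psi^G$, so $\sigma$ permutes $\{\chi_1,\dots,\chi_r\}$; moreover $[\psi^G,\chi_i^\sigma]=[(\psi^G)^\sigma,\chi_i^\sigma]=[\psi^G,\chi_i]$ and $\chi_i^\sigma(1)=\chi_i(1)$, so this permutation preserves both multiplicities and degrees.

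Next I would pass to degrees. From $\psi(1)\abs{G:H}=\psi^G(1)=\sum_{i=1}^r a_i\chi_i(1)$ and the hypothesis that each $\chi_i(1)/\psi(1)$ is a positive integer, we obtain
\[
\abs{G:H}\;=\;\sum_{i=1}^{r}a_i\,\frac{\chi_i(1)}{\psi(1)}.
\]
I would then group the summands according to the orbits of $\groupgen{\sigma}$ on $\{\chi_1,\dots,\chi_r\}$: on a single orbit $O$ all constituents share one multiplicity $a_O$ and one degree $d_O$, so $O$ contributes the integer $\abs{O}\,a_O\,(d_O/\psi(1))$. Since $\groupgen{\sigma}$ is a $p$-group, every orbit has size a power of $p$, and an orbit has size $1$ precisely when it is a single $\sigma$-fixed constituent.

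Finally, assume for contradiction that $\sigma$ fixes none of the $\chi_i$. Then every orbit has size divisible by $p$, so every summand $\abs{O}\,a_O\,(d_O/\psi(1))$ in the orbit-grouped identity above is divisible by $p$, whence $p\mid\abs{G:H}$, contrary to hypothesis. Therefore some irreducible constituent of $\psi^G$ is $\sigma$-invariant. I do not expect a genuine obstacle here; the only point requiring care is setting up the displayed identity correctly, and the role of the divisibility hypothesis $\psi(1)\mid\chi_i(1)$ is exactly to make $\abs{G:H}$ appear as an integer combination of $\groupgen{\sigma}$-orbit sizes, so that $p\nmid\abs{G:H}$ forces a fixed point (the classical "$p$-group acting, fixed points $\equiv$ cardinality mod $p$" phenomenon, used here on the weighted count).
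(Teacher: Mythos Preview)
Your argument is correct. The key steps---$(\psi^G)^\sigma=\psi^G$, the resulting permutation action of $\groupgen{\sigma}$ on the irreducible constituents preserving multiplicities and degrees, the integer identity $\abs{G:H}=\sum_i a_i\,\chi_i(1)/\psi(1)$, and the orbit-counting modulo $p$---are all sound, and the divisibility hypothesis is used in exactly the right place.

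As for comparison: the paper does not supply a proof of this proposition at all; it merely quotes it from \cite{Grittini:Degrees_of_irreducible_fixed_p-solvable} as Proposition~2.3 there. So there is nothing in the present paper to compare against. Your proof is the standard one and would be entirely appropriate as a self-contained justification.
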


Notice that the hypothesis of $\psi(1)$ dividing the degree of every irreducible constituent of $\psi^G$ is automatically verified if there exists $N \lhd G$, with $N \leq H$, and a character $\phi \in \irr(N)$ which extends to $\psi$.

We are now ready to prove Theorem~\ref{result:main} for almost-simple groups, in the stronger version we mentioned.

\begin{theorem}
\label{theorem:almost_simple}
Let $G$ be an almost-simple group with socle $S$, let $p$ be an odd prime number and let $\sigma \in \gal(\Q_{\abs{G}}/\Q)$ be of order $p$. If $p \mid \abs{G/S}$, then $G$ has a $\sigma$-invariant irreducible character $\chi$ of degree divisible by $p$ and such that $S$ is not contained in its kernel.
\end{theorem}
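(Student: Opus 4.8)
The plan is to exploit the structure of $G/S$ as a subgroup of the outer automorphism group of a simple group of Lie type $S$, and reduce, via the hypothesis $p \mid \abs{G/S}$, to the case where $G$ contains an automorphism of order $p$ that is (a power of) a field automorphism. Recall that $\op{Out}(S)$ for $S$ of Lie type has the form $\op{Out}(S) = D \rtimes (\Phi \times \Gamma)$, with $D$ the diagonal automorphisms, $\Phi$ the field automorphisms (cyclic), and $\Gamma$ the graph automorphisms (of order $1$, $2$, or $3$). Since $p$ is odd and $p \mid \abs{G/S}$, either $p = 3$ and $G$ induces a graph automorphism of order $3$ on $S = \op{P\Omega}_8^+(q)$ (the triality case), or $p$ divides $\abs{D}$, or $p$ divides $\abs{\Phi}$. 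The diagonal part is small ($\abs{D} \le n$ for $\PSL_n$ or $\PSU_n$, bounded for other types), so $p \mid \abs{D}$ only in very restricted situations; and in the triality case one argues separately using the known character table information for $\tw3D_4$-related configurations (or simply that $p = 3 > $ the relevant bound fails). The bulk of the work is the generic case: $G$ contains a nontrivial field automorphism $\tau$ for $S$, and we let $\nu = \tau^{o(\tau)/p}$ be a field automorphism of order $p$.

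In the field-automorphism case, I would follow the strategy already used in the proof of Proposition~\ref{proposition:PSL3}. Let $\tilde S$ be a regular embedding for $S$ and set $\Gamma = \langle \tau \rangle \ltimes \tilde S = G\tilde S$. Apply Corollary~\ref{corollary:semisimple_characters} (when $S \ncong \tw2E_6(q), \tw2F_4(2)'$) to produce a semisimple character $\psi \in \irr(\tilde S)$ with $[\psi_S, {\psi_S}^\nu] = 0$ and either $\psi^\sigma = \psi$ or $\psi^\sigma = \psi^{\nu^k}$ with $(k,p)=1$. Because $\nu$ has minimal order $p$ in the cyclic group $\langle \tau \rangle$, the pair $\{\psi, \psi^\nu, \dots, \psi^{\nu^{p-1}}\}$ has trivial stabilizer in $\langle\tau\rangle$, so $I_\Gamma(\psi) = \tilde S$ and hence $I_G(\psi \cap \text{anything in } S)$-type inertia groups are controlled. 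If $\tilde S \cap G = S$, then $\psi_S$ is a sum of $\le \abs{D}$ conjugate characters; either $\psi_S$ is irreducible (and then $\chi := \psi_S \in \irr(S)$ has $\chi^\nu \ne \chi$, $I_G(\chi) = S$, and $p \mid [G:S] \mid [G:I_G(\chi)]$, so we apply Lemma~\ref{lemma:normal_coprime} to get a $\sigma$-invariant character of $G$ of degree divisible by $p$ over $\chi$, not containing $S$ in its kernel), or $\psi_S$ breaks up, in which case one instead works with $\psi$ itself over the almost-simple group $G\tilde S$ and descends — but the cleaner route is to note that $\psi_S$ fails to be irreducible only for a bounded list of $S$ where the precise character degrees are known, handled ad hoc exactly as the $\PSL_3$, $\PSU_3$ exceptions of Theorem~\ref{result:simple_rational} are handled in Proposition~\ref{proposition:PSL3}. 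If $\tilde S \le G$, then $\psi$ already has inertia subgroup $\tilde S$ in $G$ with $p \mid [G : \tilde S]$, and Lemma~\ref{lemma:normal_coprime} applied to $\psi \in \irr(\tilde S) \lhd G$ finishes; since $\ze S \le \ker\psi$ and $\psi^\nu \ne \psi$ on $S$, the induced character does not contain $S$ in its kernel.

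It remains to deal with the finitely many small or excluded cases: $S \cong \tw2F_4(2)'$ (finite check on the known character table — but note $\op{Out} = 2$, so $p$ odd cannot divide $\abs{G/S}$ here, and this case is vacuous); $S \cong \tw2E_6(q)$ with $p \mid \abs{\Phi}$ (here one invokes the torus-and-Lusztig-series construction of Lemma~\ref{lemma:torus_subgroup} and Proposition~\ref{proposition:semisimple_characters} directly on a perfect central extension, the excluded-case hypothesis of Proposition~\ref{proposition:semisimple_characters} being circumvented by appealing instead to the explicit torus $T \cong (\ZZ_{q+1})^2 \times (\ZZ_{q^2-1})^2$ described in the proof of Lemma~\ref{lemma:torus_subgroup} together with \cite[Lemma~3.1]{Grittini:Degrees_of_irreducible_fixed_p-solvable}); the $\PSL_3(q)$, $\PSU_3(q)$ cases with $p = 3$, which are exactly Proposition~\ref{proposition:PSL3}; and the case $p \mid \abs{D}$ with no field automorphism present, where $G/S$ consists of diagonal automorphisms and one uses that semisimple characters of $\tilde S$ restricted to $S$ realize the relevant degrees divisible by $p$ (these forced $p \mid \abs{D}$ cases again being a short explicit list, e.g. $\PSL_p(q)$ with $p \mid q-1$). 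I expect the \textbf{main obstacle} to be organizing this case division cleanly — in particular, ensuring in the field-automorphism case that when $\psi_S$ is reducible we still extract a $\sigma$-invariant constituent of degree divisible by $p$ not killing $S$, which requires either a careful application of Proposition~\ref{proposition:invariant_in_subgroup} (choosing $H$ so that $\psi(1)$ divides the degree of every constituent of $\psi^G$) or a finite verification, and making sure the graph-automorphism subcase $p = 3$, $S = \op{P\Omega}_8^+(q)$ is genuinely covered (here $e_3(q)$ considerations and the fact that $\op{P\Omega}_8^+(q)$ does not appear among the exceptions of Theorem~\ref{result:simple_rational} for $p = 3$ give a rational character of degree divisible by $3$, which is automatically $\sigma$-invariant and $\Gamma$-invariant, hence extends or induces suitably).
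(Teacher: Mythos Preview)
Your field-automorphism argument via Corollary~\ref{corollary:semisimple_characters} and Lemma~\ref{lemma:normal_coprime} is the same engine the paper uses, but your case organisation is inverted relative to the paper's, and this leaves a real gap. The paper does \emph{not} split on where the $p$-part of $G/S$ sits inside $D \rtimes (\Phi \times \Gamma)$. Instead it first asks whether $S$ possesses a \emph{rational} irreducible character $\phi$ of degree divisible by~$p$. If so, $\phi$ extends rationally to the Sylow $p$-subgroup of its inertia group (by \cite[Theorem~6.30]{Isaacs}, using that $p$ is odd), Proposition~\ref{proposition:invariant_in_subgroup} then produces a $\sigma$-invariant character of the full inertia group lying over~$\phi$, and Clifford induction yields the desired~$\chi$. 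This single step disposes simultaneously of triality, of $\tw2E_6(q)$, and of every diagonal-automorphism situation except $\PSL_3(q),\PSU_3(q)$ with $p=3$: by inspection of Theorem~\ref{result:simple_rational}, those two are the only entries on its list with $p \mid \abs{\tilde S/S}$. What remains after this filter is automatically a configuration with $p \nmid \abs{\tilde S/S}$ and with the Sylow $p$-subgroup $P/S$ of $G/S$ generated by a field automorphism, where your Corollary~\ref{corollary:semisimple_characters} argument runs cleanly; in particular no separate $\tw2E_6$ workaround is needed.

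Concretely, your assertion that ``$p \mid \abs{D}$ with no field automorphism present'' is ``a short explicit list, e.g.\ $\PSL_p(q)$ with $p \mid q-1$'' is incorrect: the relevant family is $\PSL_{kp}(q)$ with $p \mid q-1$ for every $k\ge 1$ (and the analogous $\PSU$ family), infinitely many groups, and the phrase ``semisimple characters of $\tilde S$ restricted to $S$ realise the relevant degrees divisible by $p$'' does not supply $\sigma$-invariance. The paper's rational-character filter is precisely what handles this infinite family uniformly. The same reorganisation also dissolves your reducibility worry about $\psi_S$: once one knows $p \nmid \abs{\tilde S/S}$, the number of irreducible constituents of $\psi_{G\cap\tilde S}$ is prime to~$p$, so the order-$p$ operator $\rho$ fixing $\psi$ must fix at least one constituent~$\phi$; then $I_{PN}(\phi)=N$ (with $N=G\cap\tilde S$) gives $p \mid \phi^{PN}(1)$, and Proposition~\ref{proposition:invariant_in_subgroup} applied to $PN \lhd G$ finishes, with no ad hoc verification required.
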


\begin{proof}
First, we notice that, if $S$ is either an alternating or a sporadic simple group, then the Classification of finite simple groups tells us that $\op{Aut}(S)/S$ is a $2$-group. Hence, we can assume that $S$ is a simple group of Lie type.

Let $S \leq \tilde{S} \leq \op{Aut}(S)$ be a regular embedding for $S$, and let $P/S \in \syl{p}{G/S}$. Suppose first that there exists a rational character $\phi \in \irr(S)$ of degree divisible by $p$ and let $T=I_G(\phi)$ be its inertia subgroup. We can assume that $T/S \cap P/S$ is a Sylow $p$-subgroup for $T/S$. Since $p$ is odd and $\phi$ is rational, it follows from \cite[Theorem~6.30]{Isaacs} that $\phi$ has a rational extension $\theta$ to $P \cap T$, and by Proposition~\ref{proposition:invariant_in_subgroup} there exists a $\sigma$-invariant character $\hat{\phi} \in \irr(T \smid \phi)$, lying over $\theta$, of degree divisible by $p$. We can take $\chi = \hat{\phi}^G$ and we are done in this case.

Hence, we can assume that no rational irreducible character of $S$ has degree divisible by $p$. It follows that either $p \nmid \abs{S}$ or we are in one of the cases of Theorem~\ref{result:simple_rational}. If $p=3$, $S = \op{PSL}_3(q)$ and $3 \mid q - 1$, or $S = \op{PSU}_3(q)$ and $3 \mid q + 1$, then $\abs{\tilde{S}/S} = 3$ and, since $\op{Aut}(S)/\tilde{S}$ is abelian, we have that $P$ is normal in $G$. Since $P/S$ is a $p$-group, by Proposition~\ref{proposition:PSL3} there exists a $\sigma$-invariant character $\psi \in \irr(P)$ of degree divisible by $p$ and such that $S \nleq \ker(\psi)$, then the theorem follows from Proposition~\ref{proposition:invariant_in_subgroup}.

In all the remaining cases, namely if $p \nmid \abs{S}$ or if we are in the remaining cases of Theorem~\ref{result:simple_rational}, we know from the Classification of finite simple groups that $p \nmid \abs{\tilde{S}/S}$, $P(G \cap \tilde{S})/S \lhd G/S$ and $P/S$ is generated by a field automorphism $\tau$ of order a power of $p$. By Corollary~\ref{corollary:semisimple_characters}, there exists $\psi \in \irr(\tilde{S})$ which does not have $S$ in its kernel, such that $[{\psi_S}^{\nu}, \psi_S] = 0$, where $\nu=\tau^{o(\tau)/p}$ has order $p$, and such that $\psi^{\rho}=\psi$, for $\rho \in \groupgen{\tau} \times \groupgen{\sigma}$ equal to either $1 \times \sigma$ or $\nu^k \times \sigma^{\shortminus 1}$ for some positive integer $k$ coprime with $p$. Let $N = G \cap \tilde{S}$, if $\phi$ is an irreducible constituent of $\psi_N$, then $\phi^{\nu} \neq \phi$ and, since $p \nmid \abs{\tilde{S}/N}$, $\phi^{\rho}=\phi$ and, thus, either $\phi^{\sigma}=\phi$ or $\phi^{\sigma}=\phi^{\nu^k}$. Since $\nu$ is contained in every proper subgroup of $\groupgen{\tau}$, we have that $I_{PN}(\phi)=N$ and it follows from Lemma~\ref{lemma:normal_coprime} that $\phi^{PN} \in \irr(PN)$ is $\sigma$-invariant and $p$ divides $\phi^{PN}(1)=\phi(1)\abs{PN:N}$. Finally, since $PN$ is normal in $G$, the theorem follows again from Proposition~\ref{proposition:invariant_in_subgroup}.
\end{proof}

\section{Proof of Theorem~\ref{result:main}}

The following lemma is essentially \cite[Lemma 5]{Bianchi-Chillag-Lewis-Pacifici:Character_degree_graphs}, only slightly more general.

\begin{lemma}
\label{lemma:extension}
Let $G$ be a finite group and let $M \lhd G$ such that $M=S_1 \times ... \times S_n$, for some non-abelian simple groups $S_i$. For $i=1,...,n$, let $\delta_i \in \irr(S_i)$ such that it extends to $\op{Aut}(S_i)$, and such that ${\delta_i}^g = \delta_j$ whenever ${S_i}^{g^{\shortminus 1}} = S_j$, for $g \in G$. Then, $\delta = \delta_1 \times ... \times \delta_n \in \irr(M)$ extends to $G$. Moreover, if each $\delta_i$ has a rational extension to $\op{Aut}(S_i)$, then $\delta$ has a rational extension to $G$.
\end{lemma}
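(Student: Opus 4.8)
The plan is to reduce to the classical extension criterion for direct products of simple groups and then track rationality through the reduction. First I would recall the standard fact (essentially \cite[Lemma~5]{Bianchi-Chillag-Lewis-Pacifici:Character_degree_graphs}, or Gallagher/Isaacs-type arguments): a character $\delta = \delta_1 \times \cdots \times \delta_n$ of $M = S_1 \times \cdots \times S_n$ extends to $G$ provided each $\delta_i$ extends to $\op{Aut}(S_i)$ and the family $\{\delta_i\}$ is $G$-invariant in the sense stated, i.e. $\delta_i^g = \delta_j$ whenever $S_i^{g^{-1}} = S_j$. The mechanism is that $G$ acts on $M$ by conjugation, hence maps to $\op{Aut}(M)$, and since $M$ is a direct product of non-abelian simple groups, $\op{Aut}(M)$ is the wreath product $\prod_i \op{Aut}(S_i) \rtimes \Sigma$ where $\Sigma$ permutes isomorphic factors; the hypotheses say precisely that $\delta$ is invariant under the image of $G$ in $\op{Aut}(M)$, and that $\delta$ extends to the full preimage $\prod_i \op{Aut}(S_i) \rtimes \Sigma_\delta$ of its stabilizer. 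Pulling back such an extension along $G \to \op{Aut}(M)$ and then along the quotient $G/C_G(M) \hookrightarrow \op{Aut}(M)$ — noting that $M \cap C_G(M) = \ze{M} = 1$, so $MC_G(M) = M \times C_G(M)$ and one first extends $\delta$ trivially across $C_G(M)$ — gives an extension of $\delta$ to $G$.

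For the main (rationality) statement I would run the same argument but keep a Galois automorphism $\sigma \in \gal(\Q_{\abs{G}}/\Q)$ in hand throughout. Assume each $\delta_i$ has a \emph{rational} extension $\hat\delta_i$ to $\op{Aut}(S_i)$. The key point is that when one builds the extension of $\delta$ to $\prod_i\op{Aut}(S_i) \rtimes \Sigma_\delta$, one does not just use that $\delta$ extends but one can choose the extension to be rational: on the base group $\prod_i \op{Aut}(S_i)$ take $\hat\delta_1 \times \cdots \times \hat\delta_n$, which is rational since each factor is; and across the permutation part $\Sigma_\delta$ one uses that a rational character of a normal subgroup which is invariant and whose stabilizer quotient is such that the extension exists can be chosen rational — here I would invoke \cite[Theorem~6.30]{Isaacs} (the canonical/rational extension to a group of odd index is unavailable in general, but across a permutation group one can instead argue directly, as the relevant cohomological obstruction and the Galois action are compatible, exactly as in the proof that a $G$-invariant rational character of $M$ extends rationally when the obstruction vanishes). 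More concretely: by the non-rational part already proved, $\delta$ extends to $\prod_i\op{Aut}(S_i)\rtimes\Sigma_\delta$; any two extensions differ by a linear character of $\Sigma_\delta$; averaging over the Galois orbit, or using that the set of extensions is permuted by $\gal(\Q_{\abs G}/\Q)$ and has a distinguished rational member because $\hat\delta_1\times\cdots\times\hat\delta_n$ is a rational extension to the base, one gets a rational extension $\hat\delta$ to $\prod_i\op{Aut}(S_i)\rtimes\Sigma_\delta$. Restricting $\hat\delta$ along the composite $G \to G/C_G(M) \hookrightarrow \prod_i\op{Aut}(S_i)\rtimes\Sigma_\delta$ (after extending trivially over $C_G(M)$, which preserves rationality) yields a rational extension of $\delta$ to $G$.

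The step I expect to be the main obstacle is the rationality bookkeeping in the passage across the permutation factor $\Sigma_\delta$: extending $\delta$ from the base group to the semidirect product with $\Sigma_\delta$, and doing so \emph{rationally}. The clean way is to observe that the Galois group $\gal(\Q_{\abs G}/\Q)$ acts on the (non-empty, by the first part) set of extensions of $\delta$ to $\prod_i\op{Aut}(S_i)\rtimes\Sigma_\delta$, that this set is a torsor under $\irr(\Sigma_\delta)$, and that the base-group character $\hat\delta_1\times\cdots\times\hat\delta_n$ is already $\sigma$-fixed; hence some extension is $\sigma$-fixed for every $\sigma$, i.e. rational. One should double-check that $\Sigma_\delta$ acts by permuting the \emph{identical} pairs $(S_i,\delta_i)$ so that the base character is genuinely $\Sigma_\delta$-invariant (this is exactly the hypothesis $\delta_i^g=\delta_j$), and that the trivial extension over $C_G(M)$ and the pullback along $G/C_G(M)\hookrightarrow\op{Aut}(M)$ introduce no new irrationalities — both are immediate since a pullback of a rational character along a group homomorphism is rational. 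With these points settled, the lemma follows.
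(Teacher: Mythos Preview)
Your overall strategy—pass to $\op{Aut}(M)$, build an extension of $\delta$ inside the wreath product $\prod_i\op{Aut}(S_i)\rtimes\Sigma_\delta$, and pull back along $G\to G/\ce{G}{M}\hookrightarrow\op{Aut}(M)$—is sound, and the non-rational existence statement is indeed covered by the reference you cite. The problem is the rationality step across the permutation factor $\Sigma_\delta$, which you yourself flag as the main obstacle and then try to settle by a torsor/Galois argument. That argument does not work. Knowing that the base character $\hat\delta_1\times\cdots\times\hat\delta_n$ is rational tells you only that $\gal(\Q_{\abs{G}}/\Q)$ permutes the set of extensions; it gives no fixed point in general, and even if each individual $\sigma$ had a fixed extension you would still need a \emph{single} extension fixed by all of them. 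Abstractly the claim is simply false: a rational invariant character of a normal subgroup that extends need not extend rationally (the sign character of $C_2$ inside $C_4$ is the standard example), so no argument using only the torsor structure and rationality of the base can succeed. The appeal to \cite[Theorem~6.30]{Isaacs} does not help either, since that requires odd index.

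What actually makes the wreath-product case work is that there is a \emph{canonical} extension across $\Sigma_\delta$, namely the one coming from tensor induction: let $\Sigma_\delta$ act on $V_1\otimes\cdots\otimes V_n$ by permuting the tensor factors, where $V_i$ affords $\hat\delta_i$. The character values of this extension are integer polynomials in the values of the $\hat\delta_i$, hence rational when the $\hat\delta_i$ are. This is precisely the mechanism in the paper's proof, though the paper bypasses $\op{Aut}(M)$ altogether: it takes a (rational) extension $\theta$ of $\delta_i$ to $\no{G}{S_i}$, forms the tensor-induced character $\theta^{\otimes G}\in\irr(G)$ in the sense of \cite[Section~4]{Isaacs:Character_correspondence}, checks directly that $(\theta^{\otimes G})_M$ is the product of the $\delta_m$ over one $G$-orbit of simple factors, and then multiplies these characters over the distinct orbits. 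Rationality is immediate because tensor induction sends rational characters to rational characters. If you replace your torsor argument with this explicit construction, your proof goes through.
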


\begin{proof}
Let $M = U_1 \times ... \times U_k$, where $U_1,...,U_k$ are minimal normal subgroups of $G$. In particular, each $U_j$ is the direct product of all the $S_i$s in some orbit of the action of $G$ on $S_1,...,S_n$. For any $j = 1,...,k$, we may write $\pi_j \subseteq \{1,...,n\}$ for the set of indexes such that $S_i \leq U_j$ if and only if $i \in \pi_j$. Now, for each $j = 1,...,k$, we consider the character $\phi_j = \xi_1 \times ... \times \xi_n \in \irr(M)$ such that $\xi_i=\delta_i$ if $i \in \pi_j$ and $\xi_i=1_{S_i}$ otherwise. Notice that, since $U_i \cap U_j = 1$ if $i \neq j$, $\prod_{j=1}^k \phi_j = \delta \in \irr(M)$.

Now, for some $j=1,...,k$ and for some $i = 1,...,n$ such that $S_i \leq U_j$, consider $\delta_i \in \irr(S_i)$ and notice that it has a (possibly rational) extension to $\no{G}{S_i}$, since the corresponding character $\hat{\delta_i} \in \irr(S_i\ce{G}{S_i}/\ce{G}{S_i})$ has a (possibly rational) extension to $\no{G}{S_i}/\ce{G}{S_i}$, which is isomorphic to a subgroup of $\op{Aut}(S_i)$ containing $S_i$. Let $\theta \in \irr(\no{G}{S_i})$ be such (possibly rational) extension and let $\chi_j = \theta^{\otimes G}$ as defined in \cite[Section~4]{Isaacs:Character_correspondence}. We have that $\chi_j$ is a character of $G$, and that it is rational if $\theta$ is rational. Moreover, by \cite[Lemma~4.1]{Isaacs:Character_correspondence}, if $x = x_1 \times ... \times x_n \in M$ then 
$$ \chi_j(x) = \prod_{t \in T} \theta(txt^{-1}) = \prod_{t \in T} {\delta_i}^t(x) = \prod_{m \in \pi_j} {\delta_m}(x) = \phi_j(x) $$
where $T$ is a set of representatives of the right cosets of $\no{G}{S_i}$ in $G$, since ${\delta_i}^t = \delta_m$ whenever ${S_i}^{t{\shortminus 1}}=S_m$, and $\{S_m\}_{m \in \pi_i}$ is an orbit under $G$ (notice that, with an abuse of notation, we are identifying $\delta_i$ with $\delta_i 1_{\ce{M}{S_i}} \in \irr(M)$). It follows that ${\chi_j}_M = \phi_j$.

Finally, if we consider $\chi = \prod_{i=1}^k \chi_i$, we have that $\chi$ is rational if each $\delta_i$ has a rational extension to $\op{Aut}(S_i)$, and that $\chi_M = \prod_{i=1}^k \phi_i = \delta \in \irr(M)$. Hence, $\chi$ is irreducible and it is a (possibly rational) extension of $\delta$ to $G$.
\end{proof}

The following proposition is a direct consequence of the previous Lemma~\ref{lemma:extension} and of \cite[Theorem 2]{Giudici-Liebeck-Praeger-Saxl-Tiep:Arithmetic_results}, and we will use it also to prove our Theorem~\ref{result:rational}. We recall that, if $p$ is a prime number, a group $H \leq \op{Sym}_n$ is said to be \textit{$p$-concealed} if $p \mid \abs{H}$ and all the orbits of $H$ on the power set of $\{1,...,n\}$ have size coprime with $p$.

\begin{proposition}
\label{proposition:rational}
Let $G$ be a finite group and let $M \lhd G$ be a non-abelian minimal normal subgroup, such that $M=S_1 \times ... \times S_n$, for some non-abelian simple groups $S_i \cong S$, for $i = 1,...,n$. Suppose $\rup{p'}{G/M}=G/M$ for some prime number $p$, then either 
\begin{itemize}
\item[a)] $G$ has a rational irreducible character of degree divisible by $p$,
\item[b)] $n=1$ and there exists $N \lhd G$ such that $N \cap M = 1$ and $G/N$ is an almost simple group with socle $MN/N$, or
\item[c)] there exists $M \leq N \lhd G$ such that either $G/N \cong \op{Alt}_k$ for some $k \mid n$, $G/N \cong \op{A\Gamma L}_1(8)$ and $p=3$, or $G/N \cong D_{10}$, the dihedral group of order $10$, and $p=2$.
\end{itemize}
\end{proposition}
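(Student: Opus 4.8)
The plan is to reduce the statement to the arithmetic dichotomy for permutation groups supplied by \cite[Theorem~2]{Giudici-Liebeck-Praeger-Saxl-Tiep:Arithmetic_results}, using Lemma~\ref{lemma:extension} as the bridge that turns a $p$-coprime orbit on subsets into a rational irreducible character of degree prime to... no, divisible by $p$. First I would fix a nonprincipal $\delta \in \irr(S)$ which extends to $\op{Aut}(S)$ and admits a \emph{rational} such extension; such a $\delta$ exists for every non-abelian simple $S$ (one may take, for instance, the Steinberg character in the Lie type case, or argue from the known character tables in the alternating and sporadic cases), and I would record that $\delta$ can be chosen of degree divisible by $p$ whenever $p \mid \abs{S}$ — indeed the Steinberg character has degree $\abs{S}_\ell$ for the defining prime, and for non-defining $p$ one invokes Ito--Michler / known degrees to see $S$ has a rational irreducible character of $p$-divisible degree unless $p \nmid \abs{S}$. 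The first genuine case split is therefore whether $p \mid \abs{S}$.

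If $p \mid \abs{S}$, pick $\delta$ rational, extending rationally to $\op{Aut}(S)$, of degree divisible by $p$; then $\delta^{\times n} \in \irr(M)$ is $G$-invariant (it is fixed by every $g$ permuting the factors, since all $S_i\cong S$ and $\delta$ is characteristic enough — here one uses that $\delta$ is the unique rational irreducible of its degree, or simply transports $\delta$ along the chosen isomorphisms so that $\delta_i^g=\delta_j$ whenever $S_i^{g^{-1}}=S_j$). By Lemma~\ref{lemma:extension} it has a rational extension $\hat\delta$ to $G$, and $p \mid \hat\delta(1)$ since $p \mid \delta(1)^n$; this is alternative~(a). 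So from now on assume $p \nmid \abs{S}$, hence $p \mid \abs{G/M}$ because $\rup{p'}{G/M}=G/M$ forces $G/M\ne 1$ and in fact $G/M$ has order divisible by $p$. The conjugation action of $G$ on $\{S_1,\dots,S_n\}$ gives a homomorphism $G \to \op{Sym}_n$ whose image $H$ satisfies $p \mid \abs{H}$ (the kernel is $\bigcap \no{G}{S_i}$, which modulo $M$ embeds in $\op{Out}(S)^n$, a group of order prime to $p$ since $p \nmid \abs{\op{Out}(S)}$ for $p\nmid\abs{S}$; one double-checks the few small exceptions).

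Now the dichotomy: either $H$ is $p$-concealed or not. If $H$ is \emph{not} $p$-concealed, there is a subset $\Delta \subseteq \{1,\dots,n\}$ whose $H$-orbit has size divisible by $p$; take $\delta_i = \delta$ for $i \in \Delta$ and $\delta_i = 1_{S_i}$ otherwise (permuting by the chosen isomorphisms as above so the cocycle condition of Lemma~\ref{lemma:extension} holds on each $G$-orbit, which is automatic since $\delta$ is invariant under $\op{Aut}(S)$). Then $\delta_1\times\cdots\times\delta_n$ has a rational extension to its stabilizer $I_G(\cdot)$, and inducing to $G$ yields a rational irreducible character whose degree is a multiple of $\abs{G:I_G(\cdot)}=\abs{\,H\text{-orbit of }\Delta\,}$, hence divisible by $p$: alternative~(a) again. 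If $H$ \emph{is} $p$-concealed, then \cite[Theorem~2]{Giudici-Liebeck-Praeger-Saxl-Tiep:Arithmetic_results} pins $H$ down: for $p$ odd the only $p$-concealed groups are (intransitively or imprimitively built from) $\op{Alt}_k$ with $p \mid \binom{k}{j}$-free arithmetic forcing $p\mid k!$ in the tame way, together with the sporadic coincidence $\op{A\Gamma L}_1(8)$ at $p=3$; for $p=2$ one additionally gets $D_{10}$. Chasing these through: if $n=1$ then $H=1$ is not $p$-concealed unless $p\nmid\abs{S}$ forces us into case~(b) — here $N=\ce{G}{M}$ has $N\cap M=1$ and $G/N$ is almost simple with socle $MN/N$. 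If $n>1$, the $p$-concealed structure of $H$ descends to a normal subgroup $M\le N\lhd G$ with $G/N$ isomorphic to one of $\op{Alt}_k$ ($k\mid n$, from the block structure), $\op{A\Gamma L}_1(8)$ with $p=3$, or $D_{10}$ with $p=2$: alternative~(c).

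The main obstacle is the bookkeeping in the last step — translating "$H$ is $p$-concealed" from the abstract classification of \cite{Giudici-Liebeck-Praeger-Saxl-Tiep:Arithmetic_results} into the precise normal-subgroup statement~(c), keeping careful track of the transitive-versus-intransitive and primitive-versus-imprimitive cases of the action on $\{S_1,\dots,S_n\}$, and checking that in each case the quotient $G/N$ is exactly (not merely a section related to) one of the three listed groups, with the divisibility $k\mid n$ coming from the imprimitivity block sizes. A secondary technical point, to be handled at the start, is the verification that a suitable rational $\delta$ extending rationally to $\op{Aut}(S)$ exists for \emph{every} non-abelian simple $S$ with the degree-divisibility we want when $p\mid\abs{S}$; this is a finite check against the Atlas-type data plus the Steinberg-character argument for Lie type, and is where the isolated small exceptions (if any) would surface.
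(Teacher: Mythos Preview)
Your case split on whether $p$ divides $|S|$ introduces a genuine gap. When $p \mid |S|$, you assert that one can choose a rational $\delta \in \irr(S)$ of degree divisible by $p$ that extends rationally to $\op{Aut}(S)$. This is false in general: already $S = \op{PSL}_2(8)$ with $p = 3$ fails (its three characters of degree $9$ form a single $\gal(\mathbb{Q}_7/\mathbb{Q})$-orbit, so none is rational), and Theorem~\ref{result:simple_rational} of this paper exhibits infinite families of simple groups of Lie type with no rational irreducible character of $p$-divisible degree. This is not an ``isolated small exception'' amenable to an Atlas check---it is precisely the phenomenon the paper is built around. Your claim in the complementary case that $p \nmid |S|$ forces $p \nmid |\op{Out}(S)|$ is likewise false (take $S = \op{PSL}_2(32)$ with $p = 5$, where $5\nmid|S|$ but $|\op{Out}(S)|=5$).

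The paper's proof sidesteps both problems by never splitting on $p \mid |S|$. It fixes a rational $\delta \in \irr(S)$ extending rationally to $\op{Aut}(S)$ with \emph{no} constraint on $\delta(1)$; the $p$-divisibility of the resulting character of $G$ comes entirely from the index $|G:T_\Delta|$. For $n > 1$ one first passes to a maximal block system $\Omega = \{U_1,\ldots,U_k\}$ (so $k \mid n$, $k \geq 2$) on which $G/N$ acts \emph{primitively} and faithfully, where $N = \bigcap_i \no{G}{U_i}$; this reduction to a primitive action is what licenses the appeal to \cite[Theorem~2]{Giudici-Liebeck-Praeger-Saxl-Tiep:Arithmetic_results}, and your proposal leaves it as unspecified ``bookkeeping''. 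The hypothesis $\rup{p'}{G/M} = G/M$ then gives $p \mid |G/N|$ directly, since any normal subgroup of $G/M$ of $p'$-index must equal $G/M$---no reference to $|\op{Out}(S)|$ is needed. Finally, the primitive $p$-concealed classification also yields $\op{AGL}_3(2)$ at $p = 3$, which you omit; the paper disposes of it by noting it has a rational character of degree~$6$, landing in alternative~(a). Once you drop the case split and argue this way, your outline becomes the paper's proof.
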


\begin{proof}
If $n=1$, then $\ce{G}{M} \lhd G$, $\ce{G}{M} \cap M=1$ and $G/\ce{G}{M}$ is isomorphic to a subgroup of $\op{Aut}(S_1)$ and, thus, it is almost simple. Hence, from now on we assume that $n>1$.

We shall now consider the action of the group $G/M$ on the set $\Lambda = \{S_1, ... , S_n \}$. Let $\tilde{\Omega} \subseteq 2^{\Lambda}$ be a (eventually trivial) partition of $\Lambda$ such that $G/M$ preserves $\tilde{\Omega}$ and acts primitively on its elements. For $\Delta_i \in \tilde{\Omega}$, with $i = 1,...,k=\abs{\tilde{\Omega}}$, define
$$ U_i = \bigtimes_{S_j \in \Delta_i} S_j. $$
We have that $M = U_1 \times ... \times U_k$ and $G/M$ acts primitively on $\Omega = \{ U_1,...,U_k \}$. Moreover, if we consider $N = \bigcap_{i=1}^k \no{G}{U_i}$, we have that $G/N$ acts on $\Omega$ both primitively and faithfully, and $p \mid \abs{G/N}$ because $\rup{p'}{G/M}=G/M$.

Now, let $\delta \in \irr(S)$ be a rational character which extends rationally to $\op{Aut}(S)$. Notice that such character always exists: if $S$ is of Lie type we can use the Steinberg character, while if $S$ is either a sporadic or alternating group we can simply consider the restriction to $S$ of a rational character of $\op{Aut}(S)$ of odd degree. For $\Delta \subseteq \Omega$, we define the character
$$ \psi_{\Delta} = \delta_1 \times ... \times \delta_n \in \irr(M)=\irr(S_1 \times ... \times S_n) $$
such that $\delta_i = \delta$ if $S_i \leq U_j$ for some $U_j \in \Delta$, $\delta_i = 1_{S_i}$ otherwise. Let $T_{\Delta}=I_G(\psi_{\Delta})$ be the inertia subgroup of $\psi_{\Delta}$ in $G$ and notice that, by Lemma~\ref{lemma:extension}, $\psi_{\Delta}$ has a rational extension to $T_{\Delta}$.

Now, suppose that $G/N$ is $p$-concealed in its action on $2^{\Omega}$. Then, by \cite[Theorem 2]{Giudici-Liebeck-Praeger-Saxl-Tiep:Arithmetic_results}, either $G/N \cong \op{Alt}_k$, $G/N \cong \op{AGL}_3(2)$ or $\op{A\Gamma L}_1(8)$ and $p=3$, or $G/N \cong D_{10}$ and $p=2$. Since $\op{AGL}_3(2)$ has a rational character of degree $6$, we are done in this case. Assume now that $G/N$ is not $p$-concealed and, thus, there exists $\Delta \subset \Omega$ such that $p \mid \abs{\Delta^{G/N}} = \abs{G:G_{\Delta}}$, where $G_{\Delta}/N \leq G/N$ is the stabilizer of $\Delta$. Now, let $\psi_{\Delta}$ as in the previous paragraph and notice that $T_{\Delta}=G_{\Delta}$, since $g \in G$ fixes $\psi_{\Delta}$ if and only if ${U_j}^g \in \Delta$ whenever $U_j \in \Delta$. It follows that, if $\hat{\psi} \in \irr(T_{\Delta})$ is a rational extension of $\psi_{\Delta}$ to $T_{\Delta}$, then $\chi = \hat{\psi}^G \in \irr(G)$ is rational and $p \mid \chi(1) = \abs{G:T_{\Delta}} \hat{\psi}(1)$.
\end{proof}

\begin{proof}[Proof of Theorem~\ref{result:rational}]
First, notice that, if $\rup{2'}{G}$ has a rational character of even degree, then so has $G$ (this is a consequence of \cite[Theorem~6.30]{Isaacs}). Thus, we can assume that $\rup{2'}{G}=G$.

If $M \lhd G$ is solvable, the theorem follows by induction on $\abs{G}$. Let $M$ be a non-abelian minimal normal subgroup of $G$, it follows from Proposition~\ref{proposition:rational} that there exists $N \lhd G$ with $N \cap M = 1$ such that $G/N$ is almost simple, since alternating groups always have a rational irreducible character of even degree (see \cite[Theorem~C]{Tiep-TongViet:Odd-degree_rational}). By \cite[Theorem~B]{Tiep-TongViet:Odd-degree_rational}, we have that $G/N \cong \op{PSL}_2(3^f)$ or $\op{PGL}_2(3^f)$, for some odd $f \geq 3$, and we can see from \cite[Table~III]{Steinberg:The_representations_of} that $\op{PGL}_2(3^f)$ has a rational character of degree $3^f - 1$. Hence, $G/N \cong M \cong \op{PSL}_2(3^f)$ and $G=N \times M$. Then the theorem follows by induction on $\abs{G}$.
\end{proof}

The following lemma is a consequence of \cite[Problem~3D.4]{Isaacs:Finite_Group_Theory} and it was possibly already known. Notice that the lemma remains true, with an almost identical proof, also if we replace $p$ with a set of primes $\pi$.

\begin{lemma}
\label{lemma:frattini_quotient}
Let $G$ be a finite group such that $\rup{p}{G}=G$, for some prime number $p$, and suppose that $G$ acts by conjugation on a $p$-group $P$. If $G$ acts trivially on $P/\Phi(P)$, then $G$ centralizes $P$.
\end{lemma}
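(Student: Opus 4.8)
The plan is to reduce the statement to a standard coprime-action fact. First I would invoke the hypothesis $\rup{p}{G}=G$, i.e. $G$ has no normal subgroup of index~$p$, equivalently $G = [G,G]O^p$-type considerations show $G$ has no nontrivial $p$-quotient. Consider the action of $G$ on the elementary abelian section $V = P/\fra{P}$, which is a vector space over $\FF_p$; by hypothesis this action is trivial. I want to conclude that the action on $P$ itself is trivial. The natural tool is \cite[Problem~3D.4]{Isaacs:Finite_Group_Theory}, which (in the form I would use) says: if a group $G$ acts on a $p$-group $P$ and acts trivially on $P/\fra{P}$, then the kernel of the action of $G$ on $P$ has $p$-power index in $G$ — or phrased via the semidirect product, $G/C_G(P)$ is a $p$-group. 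The point is that the group of automorphisms of $P$ acting trivially on $P/\fra{P}$ is a $p$-group (this is the classical fact that automorphisms of a $p$-group trivial on the Frattini quotient form a normal $p$-subgroup of $\op{Aut}(P)$).

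So the key steps, in order, are: (1) set $C = \ce{G}{P}$ and consider $G/C$, which embeds into $\op{Aut}(P)$; (2) by the hypothesis that $G$ acts trivially on $P/\fra{P}$, the image of $G$ in $\op{Aut}(P)$ lies inside the subgroup $A$ of automorphisms inducing the identity on $P/\fra{P}$; (3) cite the standard fact (this is exactly \cite[Problem~3D.4]{Isaacs:Finite_Group_Theory}) that $A$ is a $p$-group, hence $G/C$ is a $p$-group; (4) but $\rup{p}{G}=G$ means $G$ has no nontrivial $p$-group quotient, so $G/C = 1$, i.e. $G$ centralizes $P$. For the parenthetical remark about replacing $p$ by a set of primes $\pi$: there one replaces $P$ by a $\pi$-group (a nilpotent $\pi$-group, or one argues Sylow-by-Sylow), uses that $\op{Aut}$ acting trivially on the Frattini quotient of each Sylow subgroup is a $\pi$-group, and uses $\rup{\pi}{G}=G$; the argument is otherwise identical.

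The only mild obstacle is making sure the cited problem from Isaacs is stated in exactly the form needed — it is usually phrased as ``a $p$-group of automorphisms'' or ``$p$-power index'', and one must be careful that $P$ need not be abelian, so $P/\fra{P}$ is the right section to look at rather than $P/[P,P]$ or the like. Since this is routine and the needed statement is precisely the content of \cite[Problem~3D.4]{Isaacs:Finite_Group_Theory}, I expect the proof to be two or three lines: let $C=\ce GP$; then $G/C$ acts faithfully on $P$ and trivially on $P/\fra P$, so by \cite[Problem~3D.4]{Isaacs:Finite_Group_Theory} $G/C$ is a $p$-group; as $\rup pG=G$ forces $G$ to have no nontrivial $p$-quotient, $G=C$, so $G$ centralizes $P$. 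The $\pi$-version is remarked to follow mutatis mutandis.
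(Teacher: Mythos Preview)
Your proof is correct and follows essentially the same approach as the paper: both set $C=\ce{G}{P}$, invoke \cite[Problem~3D.4]{Isaacs:Finite_Group_Theory} to see that $G/C$ is a $p$-group, and then use $\rup{p}{G}=G$ to conclude $G=C$. Your version is in fact slightly more direct---the paper argues by contradiction, picking an element of prime order $r\ne p$ in $G/C$ and lifting it to an $r$-element of $G$ to which the coprime-action fact applies, whereas you go straight to the statement that the automorphisms of $P$ trivial on $P/\fra{P}$ form a $p$-group.
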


\begin{proof}
Let $C = \ce{G}{P}$ and notice that $C \lhd G$. Suppose $C \lneq G$, then there exists $xC \in G/C$ of prime order $r \neq p$. Hence, there exists $x \in G$ of order a power of $r$ and such that $x \not\in C$. However, $\groupgen{x}$ is a group of order coprime with $\abs{P}$ acting trivially on $P/\Phi(P)$ and it follows from \cite[Problem~3D.4]{Isaacs:Finite_Group_Theory} that $\groupgen{x} \leq C$, a contradiction.
\end{proof}

Finally, we also need a corollary of Theorem~\ref{result:simple_rational} and Proposition~\ref{proposition:PSL3}, to deal with the order of the \textit{Schur multiplier} $M(S)$ of a simple group of Lie type $S$ with no $\sigma$-invariant characters of degree divisible by $p$. See \cite[Definition~11.12]{Isaacs} for the definition of Schur multiplier.

\begin{corollary}
\label{corollary:Schur_multiplier}
Let $p > 2$ be a prime number, let $S$ be a simple group of Lie type of order divisible by $p$ and let $\sigma \in \gal(\Q_{\abs{S}}/\Q)$ of order $p$. If $S$ has no $\sigma$-invariant irreducible characters of degree divisible by $p$, then $p \nmid \abs{M(S)}$.
\end{corollary}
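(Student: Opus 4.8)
The plan is to reduce the bound on $\abs{M(S)}$ to a statement about which simple groups of Lie type can occur under the hypothesis, and then to handle those cases using Theorem~\ref{result:simple_rational}, Proposition~\ref{proposition:PSL3}, and the explicit knowledge of Schur multipliers of simple groups of Lie type. First I would recall that the Schur multiplier $M(S)$ of a simple group of Lie type is, apart from a short list of well-documented exceptions (the ``exceptional'' Schur multipliers), equal to the ``canonical'' part coming from the simply connected cover, i.e.\ $\abs{M(S)}$ divides $\abs{\ze{H}}$ where $H = \mathbf{H}^F$ is the simply connected version with $S \cong H/\ze{H}$. So the argument naturally splits into: (i) controlling the order of $\ze{H}$, and (ii) checking that the finitely many exceptional multipliers do not introduce a new prime $p > 2$.

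For part (i), the key observation is that $\abs{\ze{H}}$ for the simply connected groups is a product of small numbers: it divides $\gcd(n,q-1)$ for type $A_{n-1}$ (linear), $\gcd(n,q+1)$ for type ${}^2A_{n-1}$ (unitary), divides $4$ for types $D_n$, divides $\gcd(2,q-1)$ for types $B_n$, $C_n$, $E_7$, divides $3$ for $E_6$ and ${}^2E_6$, and is trivial for the remaining types. Hence the only way an odd prime $p > 2$ can divide $\abs{\ze{H}}$ is in the cases $p \mid n$ with $S \cong \PSL_n(q)$ and $p \mid q-1$ (since $p \mid \gcd(n, q-1)$), or $S \cong \PSU_n(q)$ with $p \mid n$ and $p \mid q+1$, or $p = 3$ with $S \cong E_6(q)$ or ${}^2E_6(q)$ and $3 \mid q - \epsilon$. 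So I would argue: if $p \mid \abs{M(S)}$ (equivalently, after dealing with the exceptional multipliers, $p \mid \abs{\ze{H}}$), then $S$ falls into one of these short families, and in each case I must exhibit a $\sigma$-invariant irreducible character of degree divisible by $p$, contradicting the hypothesis.

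Now I cross-reference with Theorem~\ref{result:simple_rational}: since the hypothesis says $S$ has no $\sigma$-invariant character of degree divisible by $p$, a fortiori it has no rational such character, so $S$ must appear in the list (1)--(7) of Theorem~\ref{result:simple_rational}. I intersect that list with the families identified above. For $\PSL_n(q)$ with $p \mid \gcd(n,q-1)$: the list forces $e_p(q) = 1$ with $p > n$ (item (2)) — but $p \mid n$ contradicts $p > n$ — or one of the small-$n$ exceptions in item (4); since $p \mid n$ and $p$ is odd, the only survivor is $(n,p,e_p(q)) = (3,3,1)$, i.e.\ $S = \PSL_3(q)$ with $3 \mid q-1$, which is exactly ruled out by Proposition~\ref{proposition:PSL3}. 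Symmetrically, for $\PSU_n(q)$ with $p \mid \gcd(n,q+1)$, item (3) forces $p > n$ contradicting $p \mid n$, and item (5) leaves only $(n,p,e_p(q)) = (3,3,2)$, i.e.\ $S = \PSU_3(q)$ with $3 \mid q+1$, again eliminated by Proposition~\ref{proposition:PSL3}. For $E_6(q)$ and ${}^2E_6(q)$ with $p = 3$: these types do not appear anywhere in the list (1)--(7), so they already have a rational character of degree divisible by $3$, hence a $\sigma$-invariant one, a contradiction. This exhausts part (i).

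For part (ii), I would simply run through the table of simple groups of Lie type with exceptional Schur multiplier (e.g.\ $\PSL_3(4)$ with extra multiplier of order $48 = 2^4 \cdot 3$, $\PSU_4(3)$ with extra multiplier of order $3^2 \cdot \gcd(\dots)$, $\PSU_6(2)$, $\op{Sp}_6(2)$, $\Omega_7(3)$, $\PSL_2(9)$, $G_2(3)$, $G_2(4)$, ${}^2E_6(2)$, ${}^2B_2(8)$, ${}^2F_4(2)'$, and a few of type $D_4, F_4$) and check in each case whether the odd part of the exceptional multiplier is nontrivial, and if so whether that group appears in the list of Theorem~\ref{result:simple_rational}. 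The only groups with odd exceptional multiplier are $\PSL_3(4)$ ($3 \mid 48$), $\PSU_4(3)$, $\PSU_6(2)$, $\Omega_7(3)$, $G_2(3)$ (all with $3$-part), and ${}^2E_6(2)$ (with $3$-part); in each of these one checks against Theorem~\ref{result:simple_rational} — for instance $\Omega_7(3) \cong \PSO_7(3)$ and $G_2(3)$ are not of the shape required by any of (1)--(7) for $p = 3$, so they have a rational character of degree divisible by $3$; $\PSL_3(4) = \PSL_3(q)$ with $q = 4$, $3 \mid q-1$ falls under Proposition~\ref{proposition:PSL3}; similarly $\PSU_4(3)$, $\PSU_6(2) = \PSU_6(q)$ with $q=2$ need the explicit argument against the relevant case of Theorem~\ref{result:simple_rational} or a direct character-table check. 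I expect this case analysis over the exceptional multipliers to be the main obstacle, since it requires either a uniform argument or a finite but tedious verification against known character tables; the linear/unitary rank-$3$ subcases are already handled cleanly by Proposition~\ref{proposition:PSL3}, so the residual work is confined to a genuinely finite list of small groups.
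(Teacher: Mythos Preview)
Your approach is essentially the paper's: restrict $S$ via Theorem~\ref{result:simple_rational} and Proposition~\ref{proposition:PSL3}, then check that the (generic and exceptional) Schur multipliers of the surviving groups have trivial $p$-part; the paper merely organises the split in the other direction (starting from the list of survivors and reading off their multipliers rather than starting from the multiplier structure). One small slip in your part~(ii): $\PSL_2(9)$ has exceptional $3$-part (since $\abs{M(\PSL_2(9))}=6$ while the generic part is $\gcd(2,8)=2$), so it belongs on your odd-exceptional sublist --- it is dispatched by its Steinberg character of degree~$9$, exactly as in the paper.
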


\begin{proof}
By Theorem~\ref{result:simple_rational} and Proposition~\ref{proposition:PSL3}, either $S=\PSL_n(q)$ or $\PSU_n(q)$ with $p>n$, or $S \in \{\PSL_4(q), \PSU_4(q), \tw2B_2(q^2), \PSO_{2n}^+(q), \PSO_{2n}^-(q)\}$, for some positive integer $n$ and some prime power $q$. In the latter case, we have from the Classification of finite simple groups that $M(S)$ is a $2$-group, and, thus, $p \nmid \abs{M(S)}$, with the only exception of $S=\PSU_4(3)$, with $\abs{M(S)}=36$, which, however, has a rational character of degree divisible by $3$. In the former case, we have that, with few exceptions, $\abs{M(S)}$ divides $n$ and, therefore, $p \nmid \abs{M(S)}$ also in this case. If these exceptions occur, the order of the Schur multiplier is equal to $2$, unless $S=\PSL_2(9)$, with $\abs{M(S)}=6$, $S=\PSL_3(4)$, with $\abs{M(S)}=48$, $S=\PSU_4(3)$, which we already treated, or $S=\PSU_6(2)$, with $\abs{M(S)}=12$. In all this cases, our corollary holds unless $p=3$ and, thus, $n=2$. Hence, the only possible exception is when $S = \PSL_2(9)$, and the Steinberg character is a rational irreducible character of $S$ of degree divisible by $3$, contradicting our hypothesis.
\end{proof}

\begin{proof}[Proof of Theorem~\ref{result:main}]
Assume first that $p=2$. We only need to prove that $G$ is solvable, since then the thesis follows from \cite[Theorem~A]{Grittini:Degrees_of_irreducible_fixed_p-solvable}. Let $M \lhd G$ be minimal such that $G/M$ is solvable and let $M/L$ be a chief factor of $G$; by eventually replacing $G$ with $G/L$ we can assume that $L=1$ and $M = S_1 \times ... \times S_n$ is a minimal normal subgroup, with $S_i \cong S$ for some non-abelian simple group $S$ and for each $i=1,...,n$. Since $G$ does not have any rational character of even degree, it follows from \cite[Theorem~B]{Tiep-TongViet:Odd-degree_rational} that $S \cong \op{PSL}_2(3^f)$ for some odd integer $f \geq 3$.

Let $q=3^f$ and notice that $q-1=2m$ for some odd integer $m$. Since $\sigma$ has order $2$, either it fixes all elements of $\mathbb{Q}_m$ or it acts on it as the complex conjugation. If we now consider the character table of $\op{PGL}_2(q)$ (see \cite[Table~III]{Steinberg:The_representations_of}) and we compare it with the one of $\op{PSL}_2(q)$ (see \cite[Theorem~38.1]{Dornhoff:Group_Representation_TheoryA}), we can see that $\op{PGL}_2(q)$ has $(q+1)/2$ real-valued characters of even degree having values in $\mathbb{Q}_m$, which are, thus, fixed by $\sigma$, and $(q-1)/2$ of them restrict irreducibly to $\op{PSL}_2(q)$. If $S \cong \op{PSL}_2(3^f)$ with $f$ odd, this observation implies that $S$ has an irreducible character $\delta \in \irr(S)$ of even degree with a $\sigma$-invariant extension to a subgroup $S < H < \op{Aut}(S)$ such that $H \cong \op{PGL}_2(3^f)$. Since $\abs{\op{Aut}(S)/H} = f$ is odd, $\delta$ has a $\sigma$-invariant extension to its inertia subgroup in $\op{Aut}(S)$.

Consider now the character $\phi=\delta \times 1_{S_2} \times ... \times 1_{S_n} \in \irr(M)$, with $\delta$ as in the previous paragraph, and let $T=I_G(\phi)$ be its inertia subgroup; we have that $T/\ce{G}{S_1}$ is the inertia subgroup in $\no{G}{S_1}/\ce{G}{S_1}$ of $\hat{\phi} \in \irr(S_1\ce{G}{S_1}/\ce{G}{S_1})$, corresponding to $\phi$, and the $\sigma$-invariant extension $\hat{\theta} \in \irr(T/\ce{G}{S_1})$ of $\hat{\phi}$ corresponds to a $\sigma$-invariant extension $\theta \in \irr(T)$ of $\phi$. Since $2 \mid \phi(1)=\theta(1)$, it follows that $\theta^G \in \irr(G)$ is a $\sigma$-invariant character of even degree, in contradiction with our hypothesis. Hence, $G$ is solvable if $p=2$ and we are done in this case.

From now on, we can assume that $p>2$. Let $M \lhd G$ be a minimal normal subgroup of $G$, then by induction we have that $\rup{p'}{G/M} = O/M \times K/M$, with $O/M = \rad{p}{G/M}$ and $K/M$ perfect such that $\rup{p'}{K/M}=K/M$, $\rad{p'}{K/M}$ is solvable, and $(K/M)/\rad{p'}{K/M}$ is the direct product of some non-abelian simple groups with no $\sigma$-invariant irreducible characters of degree divisible by $p$.

Suppose that $M$ is a $p$-group and notice that $M \leq \ze{O}$, since $\ze{O} \lhd G$ and $M \cap \ze{O} \neq 1$. Moreover, if $M \nleq \rup{p}{K}$, then $M \cap \rup{p}{K} = 1$ and $K = M \times \rup{p}{K}$ and we are done in this case. Hence, we can assume that $\rup{p}{K} = K$.

Suppose $M \leq \Phi(O)$, then we have that $K/M$ centralizes $O/\Phi(O)$ and, by Lemma~\ref{lemma:frattini_quotient}, it follows that $K/M$ centralizes $O$ and, thus, $M \leq \ze{K}$. Let $M \leq \tilde{H} \lhd G$ such that $\tilde{H}/M = \rad{p'}{K/M}$ and let $H$ be a $p'$-complement for $M$ in $\tilde{H}$, then $H = \rad{p'}{\tilde{H}} \lhd G$ and, by eventually replacing $G$ with $G/H$, we can assume that $\tilde{H}=M$ and $K/M$ is the direct product of some non-abelian simple groups of Lie type of order divisible by $p$. Let $S$ be any of these non-abelian simple groups, we have that $S$ has no $\sigma$-invariant irreducible characters of degree divisible by $p$ and, thus, it follows from Corollary~\ref{corollary:Schur_multiplier} that $p \nmid \abs{M(S)}$, where $M(S)$ is the Schur multiplier of $S$. Hence, we have by \cite[Corollary~11.20]{Isaacs} that $M$ has a normal complement in $\tilde{K}$ for every $M < \tilde{K} \lhd K$ such that $\tilde{K}/M$ is simple (namely, $\tilde{K} = M \times \tilde{K}'$) and, thus, $M$ has a normal complement $C$ in $K$. Moreover, $C=\rup{p}{K}$ and, therefore, $C$ is normal in $G$ and we are done in this case, since we have that $O \times K = O \times C$, with $C \cong K/M$.

Assume now that $M \cap \Phi(O) = 1$, then we have that $O = \tilde{O} \times M$ for some $\tilde{O} \lhd G$ and, by eventually replacing $G$ with $G/\tilde{O}$, we can assume that $O=M$. Let $C = \ce{K}{M} \lhd G$; by proceeding as in the previous paragraph we can prove that $M$ has a complement $\tilde{C}$ in $C$ such that $\tilde{C} \lhd G$. Hence, if $C=K$ we are done. If $C < K$, then by replacing $G$ with $G/\tilde{C}$ we can assume that $K/M$ is isomorphic to a subgroup of $\op{Aut}(M) = \op{GL}_k(p)$, for some $k$ such that $\abs{M}=p^k$. In particular, we have that $K$ is isomorphic to a subgroup of the affine group $\op{AGL}_k(p)$ and, thus, $M$ is complemented in $K$. Now, let $\lambda \in \irr(M)$ of order $p$ and let $T=I_K(\lambda)$ be its inertia subgroup; since $M$ has a complement in $T$, $\lambda$ has an extension $\hat{\lambda}$ to $T$ of order $p$ (see, for instance, \cite[Lemma~3.4]{Grittini:p-length_character_degrees}), which is $\sigma$-invariant, since it has values in $\Q_p$. It follows that, if $p \mid \abs{K:T}$, then $\hat{\lambda}^K \in \irr(K)$ is a $\sigma$-invariant character of degree divisible by $p$ and we get a contradiction. Hence, we must assume that the action of $K/M$ on $V=\irr(M)$ is $p$-exceptional, according to the definition in \cite{Giudici-Liebeck-Praeger-Saxl-Tiep:Arithmetic_results}. Moreover, by eventually replacing $K$ with $K/\tilde{M}$ for some $\tilde{M} < M$ such that $\tilde{M} \lhd K$, we can assume that $M$ is normal minimal in $K$ and, thus, $K/M$ acts irreducibly on $V$. Suppose first that the action is imprimitive and let $V = V_1 \oplus ... \oplus V_m$ be any imprimitive decomposition of $V$. By \cite[Theorem~3]{Giudici-Liebeck-Praeger-Saxl-Tiep:Arithmetic_results} and \cite[Theorem~2]{Giudici-Liebeck-Praeger-Saxl-Tiep:Arithmetic_results}, and since $K$ is perfect, we have that $K/M$ has a chief factor $K/N$ isomorphic to either $\op{Alt}_m$ or $\op{PSL}_3(2)$ and, in either cases, it follows that $K$ has a $\sigma$-invariant character of degree divisible by $p$. Suppose now that $K/M$ acts on $V$ primitively, then by \cite[Theorem~1]{Giudici-Liebeck-Praeger-Saxl-Tiep:Arithmetic_results} either $K/M$ is transitive on $V \smallsetminus \{1_M\}$ or it has a composition factor isomorphic to either $\op{PSL}_2(5)$, $\op{PSL}_2(11)$, $\op{M}_{11}$, or $\op{M}_{23}$. We can verify directly that each group appearing in the latter case has a $\sigma$-invariant irreducible character of degree divisible by $p$, for every choice of $\sigma$ of order $p$. Hence, we can assume that $K/M$ is transitive on $V \smallsetminus \{1_M\}$, and it follows from Hering's Theorem (see \cite[Appendix~1]{Liebeck:Affine_permutation_groups}) that, since $K$ is perfect and $p>2$, either
\begin{enumerate}[(i)]
\item $K/M$ has a normal subgroup isomorphic to $\op{SL}_a(q)$ with $p^k=q^a$, to $\op{Sp}_{2a}(q)$ with $p^k=q^{2a}$, or to $\op{SL}_2(5)$;
\item $K/M$ is isomorphic to a subgroup of $\op{GL}_2(p)$.
\item there exists $R \lhd K$ such that $K/R \cong \op{Alt}_5$;
\item $K/M \cong \op{SL}_2(13)$ and $p=3$;
\end{enumerate}
If (i) or (iii) hold, then $K$ has a chief factor $K/L$ isomorphic either to $\op{PSL}_a(q)$ or $\op{Sp}_{2a}(q)$, and the Steinberg character of such groups is a rational character of degree a power of $p$, or to $\op{PSL}_2(5)$ or $\op{Alt}_5$, which have a $\sigma$-invariant character of degree divisible by $p$ for every prime $p \mid \abs{K/L}$ and every $\sigma$ of odd order. The same is true also in case (ii), since we can see from \cite[II.8.27~Hauptsatz]{Huppert:Endliche_Gruppen} that the only non-solvable subgroups of $\op{GL}_2(p)$ are $\op{SL}_2(p)$ and possibly $\op{Alt}_5$. Finally, if (iv) holds, then $K/M$ has exactly two characters of degree $6$. Hence, our hypothesis is contradicted in each of these cases.

From now on, we can assume that $\rad{p}{G}=1$. Suppose that $M$ is an abelian $p'$-group and let $P < G$ be Sylow $p$-subgroup of $G$, then it follows from \cite[Theorem~B]{Grittini:Degrees_of_irreducible_fixed_p-solvable} that $1 < \no{G}{P} \cap M = \ce{M}{P} \leq \ce{M}{O} = \ze{O} \cap M$. Since $\ze{O} \lhd G$ and $M$ is minimal normal in $G$, we have that $M \leq \ze{O}$ and it follows that $O = \rad{p}{O} \times M$ and we are done.

Finally, suppose that $M = S_1 \times ... \times S_n$ is the direct product of some isomorphic non-abelian simple groups. Let $S \lhd M$ be any of such non-abelian simple groups and notice that, if $p \mid \abs{G : \no{G}{S}}$, then we can always find a character $\phi \in \irr(S)$ such that $\phi 1_{\ce{M}{S}} \in \irr(M)$ has a rational extension $\hat{\phi}$ to $\no{G}{S}=I_G(\phi 1_{\ce{M}{S}})$, and ${\hat{\phi}}^G$ is a rational irreducible character of degree divisible by $p$. Suppose $p \mid \abs{\no{G}{S} : \ce{G}{S}S}$, then by Theorem~\ref{theorem:almost_simple} there exist an irreducible character $\phi$ of $S$ and an irreducible character $\hat{\phi}$ of $I_G(\phi 1_{\ce{M}{S}}) \leq \no{G}{S}$, lying over $\phi 1_{\ce{M}{S}}$, such that $\hat{\phi}^{\no{G}{S}}$ is $\sigma$-invariant and of degree divisible by $p$, and it follows that $\hat{\phi}^G = (\hat{\phi}^{\no{G}{S}})^G$ is a $\sigma$-invariant irreducible character of degree divisible by $p$. Therefore, $\ce{G}{S}S/M$ contains a Sylow $p$-subgroup of $G/M$ for every choice of $S \lhd M$ simple, and it follows in particular that $O/M$ centralizes each $S_i$, for $i=1,...,n$, and, thus, $O = \ce{O}{M}M = \ce{O}{M} \times M$, and $\ce{O}{M} = \rad{p}{O} \lhd G$. Since we are assuming that $\rad{p}{G}=1$, we have that $O=M$. It follows that, by Proposition~\ref{proposition:invariant_in_subgroup}, $K = \rup{p'}{G}$ does not have any $\sigma$-invariant irreducible character of degree divisible by $p$ and, working by induction, we can assume that $K=G$. Since alternating groups of order divisible by $p$ always have a $\sigma$-invariant character of degree divisible by $p$, it follows from Proposition~\ref{proposition:rational}, and from $K$ being perfect, that $K = N \times M$ for some $N \lhd K$ and, by induction, $K/\rad{p'}{N} = N/\rad{p'}{N} \times M\rad{p'}{N}/\rad{p'}{N}$ is the direct product of some non-abelian simple groups with no $\sigma$-invariant irreducible characters of degree divisible by $p$.
\end{proof}

\begin{center}
\subsection*{Acknowledgements}
\end{center}

\noindent
The author is grateful to Gunter Malle for providing Theorem~\ref{result:simple_rational} and the contents of Section~\ref{section:simple_groups}, and in general for his precious advice and support in the research on the topic of the paper.

\bigskip

\printbibliography[heading=bibintoc]

\end{document}